\newcounter{step}
\def\step#1{\stepcounter{step}\gdef\@currentlabel{\thestep}\par\medskip\noindent\textit{Step \thestep.}\hskip0.6em }
\newtheorem{theorem}{Theorem}[section]
\newtheorem{lemma}[theorem]{Lemma}
\theoremstyle{definition}
\newtheorem{annahme}[theorem]{Assumption}
\newtheorem{example}[theorem]{Example}
\theoremstyle{remark}
\newtheorem{remark}[theorem]{Remark}
\def\cell {C_\ell}
\def\Ell{\mathrm{L}}
\def\He{\mathrm{H}}
\def\We{\mathrm{W}}
\newcommand{\e}{\mathrm{e}}
\newcommand{\dx}[1][x]{\,\mathrm{d}#1}
\def\ee{\e}
\def\RR{\mathbb{R}}
\def\NN{\mathbb{N}}
\def\tr{\mathop{\mathrm{tr}}}
\def\OM{\varOmega}
\def\RHO{\rho}
\title{On exponential splitting methods for semilinear abstract\\ {Cauchy problems}}
\author{Bálint Farkas, Birgit Jacob and Merlin Schmitz\\[2ex]School of Mathematics and Natural Sciences, IMACM\\ University of Wuppertal, Gaußstraße 20, 42119 Wuppertal, Germany\\
 E-mail: \{farkas,bjacob,meschmitz\}@uni-wuppertal.de}
\date{}
\begin{document}

\maketitle

\begin{abstract}
	
Due to the seminal works of Hochbruck and Ostermann \cite{HO05},  \cite{HO10} exponential splittings are well established numerical methods utilizing  operator semigroup theory for the treatment of semilinear evolution equations whose principal linear part involves a sectorial operator with angle greater than $\frac\pi 2$ (meaning essentially the holomorphy of the underlying semigroup). The present paper contributes to this subject by relaxing on the sectoriality condition, but in turn requiring that the semigroup operators act consistently on an interpolation couple (or on a scale of Banach spaces). Our conditions (on the semigroup and on the semilinearity) are inspired by the approach of T.{} Kato \cite{KatoNS} to the local solvability of the Navier-Stokes equation, where the $\Ell^p$-$\Ell^r$ smoothing of the Stokes semigroup was fundamental. The present abstract operator theoretic result is applicable for this latter problem (as was already the result of Ostermann and Hochbruck), or more generally in the setting of \cite{HO05}, but also allows the consideration of examples, such as non-analytic Ornstein-Uhlenbeck semigroups or the Navier-Stokes flow around rotating bodies.
\end{abstract}
\textbf{Keywords:} Semilinear Cauchy problems, exponential splitting methods, convergence order, $C_{0}$-semigroups, scales of Banach spaces\\
\textbf{Classification:} 47N40, 65J08, 47D06, 65M15,  65M12
\section{Introduction}

We investigate the semilinear Cauchy problem
 \begin{align}\label{semilinear}
 \dot{u}(t) = Au(t)+ g(t,u(t)), \quad u(0) = u_{0}, \quad t \in [0,T],
 \end{align}
 where $[0,T]$ is a given time interval, $A\colon D(A)\subset X \to X$ generates a $C_{0}$-semigroup $(\e^{tA})_{t\geq 0}$ on a Banach space $X$, $u_{0} \in D(A)$ and $g\colon [0,T]\times V\rightarrow X$ is a locally Lipschitz continuous function, where $V$ is a Banach space as well. The precise conditions on $A$ and $g$ are described below.
We assume that the linear problem  $\dot u (t)=  Au (t)$ is well-posed, that is $A$ generates a $C_0$-semigroup, and we suppose that the solutions  can be calculated effectively with high precision (or are explicitly known). In this article we prove convergence estimates for the so-called \emph{exponential splitting methods}. This is a particular \emph{operator splitting method}, that is, a general procedure for finding (numerical) solutions of complicated evolution equations by reduction to subproblems, whose solutions  are then to be combined in order to recover the (approximate) solution of the compound problem.
The literature both on the functional and the numerical analysis sides are extremely extensive, see, e.g., the surveys \cite{Quispel}, \cite{HO-high}, \cite{HaO16}. The decomposition of the compound problem can be based on various things, such as: on physical grounds (say, separating advection and diffusion phenomena, e.g., \cite{HV}), by mathematical-structural reasons (separating linear and non-linear parts, see e.g., \cite{HO05,HO10}, \cite{HHO14}; separating the history and present in case of delay equations, see \cite{BCsF2}), etc. 
The starting point for exponential splitting methods is the definition of the mild solution of problem \eqref{semilinear}, that is the variation-of-constants formula:
\begin{align}\label{eqn:mild}
u(t) = \mathrm{e}^{tA}u(0)+\int_0^t \mathrm{e}^{(t-\tau)A} g(\tau,u(\tau))\dx[\tau].
\end{align}
First, we describe the method introduced by Hochbruck and Ostermann in \cite{HO05,HO10} in the case when $(\e^{tA})_{t\geq 0}$ is an analytic semigroup, see also \cite{HochbruckExpInt}.
An \emph{exponential integrator} is a time stepping method and it approximates the  convolution term on the right-side by a suitable quadrature rule in a given time step, where the effect of the linear propagator is not approximated but inserted precisely. Thus, for given time step $h>0$ and $t_n\coloneqq nh$ with $n\in\mathbb N$ and $nh\le T=Nh$, the solution $u(t_n)$ of the semilinear equation \eqref{semilinear}, given recursively by
 \begin{align}
 u(t_{n+1}) = \e^{hA}u(t_{n}) + \int_0^{h}\e^{(h-\tau)A}g(t_{n}+\tau,u(t_{n}+\tau))\dx[\tau],
 \end{align}
 is approximated by the $s$-stage Runge-Kutta approximation $u_n$, which is subject to the recursion
\[ u_{n+1} = \mathrm{e}^{hA}u_n+\int_0^h \mathrm{e}^{(h-\tau)A}\sum_{j=1}^s \ell_j(\tau)g(t_{n}+c_jh,U_{n,j}) \dx[\tau]\]
(the initial value $u_0=u(0)$ is known).
Here $s$ is a positive integer,  $c_1,\dots, c_s\in [0,1]$ are pairwise distinct and  $(U_{n,i})_{i=1,\dots,s}$ are defined as the solution of the integral equation
\begin{align}
U_{n,i}= \e^{c_{i}hA}u_{n} + \int_0^{c_{i}h}\e^{(c_{i}h-\tau)A}\sum_{j=1}^s \ell_{j}(\tau)g(t_{n}+c_{j}h, U_{n,j})\dx[\tau]. \label{internalstage}
\end{align}
The values $U_{n,i}$ provide an  approximation of $u(t_{n} + c_{i}h)$ for $n \in \{0,\dots,N-1\}$ at internal steps and
 $\ell_{1}, \dots, \ell_s $ are Lagrange interpolation polynomials with nodes $c_1h, \dots, c_sh\in [0,h]$, thus $\sum_{j=1}^s \ell_{j}(\tau)g(t_{n}+c_{j}h, U_{n,j})$ yields an approximation of $g(t_n+\tau,u(t_n+\tau))$ for $\tau\in [0,h]$. We require the following conditions on the semilinear Cauchy problem \eqref{semilinear}.

 \begin{annahme}[The linear setting]\label{cond:lin}
\begin{enumerate}
\item $A\colon D(A)\subset X \to X$ generates a $C_{0}$-semigroup $(\e^{tA})_{t\geq 0}$ on a Banach space $X$ and $u_{0} \in X$.
\item  $(X,V)$ is an interpolation couple, that is, also $V$ is a  Banach space, $V$ and $X$ are (continuously) embedded in a topological vector space $\mathcal{X}$. 
We assume moreover that for each  $t>0$ the linear operator  $\e^{tA}$ leaves $V\cap X$ invariant and extends to a linear operator $\e^{tA} \in L(V)\cap L(X,V)$  with $ M\coloneqq \max_{t\in [0,T]}\{  \| \e^{tA}\|_{L(X)},  \| \e^{tA}\|_{L(V)}\}<\infty$.
\item $W$ satisfies the same condition as $V$ above. (Interesting will be the case $W\in\{X,V\}$.)
\item There is a continuous, non-increasing function $\RHO_X\colon (0,\infty)\to [0,\infty)$ with  $\RHO_X\in \Ell^1(0,T)$ such that
\[
\|\e^{t A}\|_{L(X,V)}\leq \RHO_X(t)\quad\text{for every $t\in (0,T]$}.
\]
And similarly, there is a continuous, non-increasing function $\RHO_W\colon (0,\infty)\to [0,\infty)$ with  $\RHO_W\in \Ell^1(0,T)$ such that
\[
\|\e^{t A}\|_{L(W,V)}\leq \RHO_W(t)\quad\text{for every $t\in (0,T]$}.
\]
\end{enumerate}
\end{annahme}
This set of conditions, together with the ones about the non-linearity (see Assumption \ref{cond:sol} below), is inspired by T.{} Kato's iteration scheme in his operator theoretic approach to the Navier-Stokes equations, see \cite{KatoNS} and Example \ref{examp:Stokes} below. He used the $\Ell^p$-$\Ell^r$ smoothing of the linear Stokes semigroup to ``compensate the unboundedness'' of the nonlinearity, and thus could apply Banach's fixed point theorem, just as it is required by the exponential splitting in the internal steps.

\noindent In the setting of Assumption \ref{cond:lin} we clearly have $M\ge 1$, and if we set  
\[
 \OM_X(h)\coloneqq \int_0^h \RHO_X(\tau)\dx[\tau] \quad\text{and}\quad \OM_{W}(h)\coloneqq \int_0^h \RHO_W(\tau)\dx[\tau],
 \]
   then $\OM_X$, $\OM_W$ is a monotone increasing, continuous function from $[0,T)$ to $[0,\infty)$ with $\OM_{X}(0)=\OM_{W}(0)=0$. Thus $\OM_X$ and  $\OM_W$ are so-called $\mathcal K$-functions. Moreover, we abbreviate $\RHO\coloneqq \RHO_X$ and $\OM\coloneqq \OM_X$, and set
 \begin{align*}
  C_\OM&\coloneqq \sup\Bigl\{ h\sum_{k=1}^n \|\e^{kh A}\|_{L(X,V)} \:\Big|\: 0<nh\le T\Bigr\}\\
  &
  \leq \sup\Bigl\{ h\sum_{k=1}^n \RHO(kh)\:\Big|\: 0<nh\le T\Bigr\}\leq \|\RHO\|_{\Ell^1(0,T)}<\infty. 
  \end{align*}
 
 The following example is motivated by the framework of the paper \cite{HO05} by Hochbruck and Ostermann.
 \begin{example}[Bounded, analytic semigroups]\label{examp:bddanasgrp}
 Let $A$ generate a bounded, analytic $C_0$-semi\-group $(\e^{tA})_{t\geq0}$ on $X$ and suppose (without loss of generality) that $0\in \rho(A)$. For a fixed $\alpha \in [0,1)$ we set $V \coloneqq D((-A)^{\alpha})$, the domain of the fractional power of $-A$, and equip it with the norm $\|v\|_{V}\coloneqq \|(-A)^{\alpha}v\|_{X}$.  Since the  semigroup operators  commute with the powers of the  generator we have $\| \e^{hA}\|_{L(V)} = \| \e^{hA}\|_{L(X)}$. Moreover, there exists a constant $C_A>0$ such that for $h> 0$ we have
 \begin{equation}\label{eq:holsgrpfracpow}
 \|\e^{hA}\|_{L(X,V)} \le C_Ah^{-\alpha}.
 \end{equation}
  (We refer to \cite[Ch.~3]{Haase}, \cite[Ch.~4]{LunardiInt}, or \cite[Ch.~9]{ISEM2012} for details concerning fractional powers of sectorial operators.)
  Thus for this example $ \RHO(h)\le C_Ah^{-\alpha}$ and $\RHO \in \Ell^1(0,1)$. Further,  $\OM(h)\leq\frac{C_A}{1-\alpha}h^{1-\alpha}$ and
 $  C_\OM\le C_A\frac{T^{1-\alpha}}{1-\alpha} $.

  We remark that the fractional powers for negative generators  of not necessarily analytic $C_{0}$-semigroups can be also defined, see,  \cite{Komatsu} and e.g., \cite[Sec.~II.5.c]{EN}, but the validity of an estimate as in \eqref{eq:holsgrpfracpow} for some $\alpha\in (0,1)$ implies analyticity of the semigroup,  see  \cite[Thm.~12.2]{Komatsu}.
  \end{example}
 
 More examples, also for non-analytic semigroups, are provided in Section \ref{sec:Ex} below.
 
  \begin{annahme}[Properties of the solution]\label{cond:sol}
 \begin{enumerate}
 \item The semilinear Cauchy problem \eqref{semilinear} has a unique mild solution $u$, that is, $u\colon [0,T]\rightarrow X$ and $u\colon (0,T]\to V$ are  continuous and $u$ satisfies the integral equation \eqref{eqn:mild}.
 \item \label{streifen} Let $r>0$ 
  and $g\colon [0,T] \times V \to X$ be bounded on the strip
  \begin{align*}
  S_{r} \coloneqq \{ (t,v)\in (0,T]\times V \, | \, \|v-u(t)\|_{V} \leq r\}
  \end{align*}
  around the  solution $u$ and  Lipschitz-continuous on $S_r$ in the second variable, i.e., there exists a real number $L>0$ such that for all $t \in (0,T]$ and $(t,v), (t,w)\in S_r$:
  \begin{align}\label{eqn:lipschitz}
  \|g(t,v) - g(t,w)\|_X \leq L \|v-w\|_{V}.
  \end{align}
 \item   The composition $f\colon [0,T] \to X$, with $f(t)\coloneqq g(t,u(t))$ satisfies $f\in \We^{{s,1}}([0,T],W)$ for a given natural number $s\ge 1$. Note that $\We^{1,1}([0,T],W)$ equals the set of absolutely continuous functions.
 \end{enumerate}
  \end{annahme}

\begin{remark}
	If $g\colon [0,T]\times V\to X$ is uniformly Lipschitz and bounded in the second variable on bounded sets in $V$, i.e., for each $B\subseteq V$ bounded there is $L_B\geq 0$ such that
    for all $t \in [0,T]$ and $(v,w)\in B$ one has
     \begin{align*}
     \|g(t,v) - g(t,w)\|_X \leq L_B \|v-w\|_{V},
     \end{align*}
	and the solution $u\colon [0,T]\to V$ is bounded, then  Assumption \ref{cond:sol}.\ref{streifen} is satisfied.
\end{remark}

The main result of this paper reads as follows.

\begin{theorem}\label{thmsemilinear}
	Suppose Assumption  \ref{cond:sol} and let the initial value problem \eqref{semilinear} satisfy Assumption \ref{cond:lin}. Then there exists a constant $C>0$, that only depends on $T,s,\ell_{i},S_{r}, g$, the space $V$ and the semigroup $(\e^{tA})_{t\ge 0}$, such that for $h$ sufficiently small and $0 \leq t_n = nh \leq T$, the approximation $u_n$ is well-defined, that is equation \eqref{internalstage} has a unique solution $U_{n,1},\dots,U_{n,s}\in V$ satisfying $(t_n+c_jh,U_{n,j})\in S_r$ for $j=1,\ldots, s$, and its error satisfies
\begin{align}\label{fehlerabschaetzung}
 \|u_{n}-u(t_{n})\|_{V} \leq C \cdot h^{s-1} \OM_W(h)\| f^{(s)}\|_{\Ell^1([0,t_{n}],W)}
\end{align}

\end{theorem}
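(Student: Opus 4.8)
The plan is to establish the two assertions of the theorem — unique solvability of the internal-stage system \eqref{internalstage} in $V^s$ and the error bound \eqref{fehlerabschaetzung} — \emph{simultaneously} by induction on $n$, since the contraction argument needed for \eqref{internalstage} at level $n$ relies on an a~priori smallness of the already accumulated error that only the error estimate itself provides. I write $e_n\coloneqq u_n-u(t_n)$, $E_{n,i}\coloneqq U_{n,i}-u(t_n+c_ih)$, $\varepsilon_n\coloneqq\|e_n\|_V$, $f(t)\coloneqq g(t,u(t))$, and $\Lambda\coloneqq\max_j\sup_{\sigma\in[0,1]}|\ell_j(h\sigma)|$, which is finite and $h$-independent after rescaling the nodes to $[0,1]$. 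I use repeatedly that $\int_0^{t}\RHO_X(t-\tau)\dx[\tau]=\OM_X(t)$, that $\int_0^{t}\RHO_W(t-\tau)\dx[\tau]=\OM_W(t)$, that $M\ge 1$, and that $\OM_X,\OM_W$ are continuous $\mathcal K$-functions, so that $\OM_X(h),\OM_W(h)\to 0$ as $h\to 0$. Note $\varepsilon_0=0$.

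For the internal stages, fix $h$ so small that $L\Lambda\OM_X(h)\le\tfrac12$. Using \eqref{eqn:lipschitz}, $\|\e^{\cdot A}\|_{L(X,V)}\le\RHO_X$ and $\int_0^{c_ih}\RHO_X(c_ih-\tau)\dx[\tau]\le\OM_X(h)$, the right-hand side of \eqref{internalstage} is a $\tfrac12$-contraction on $(V^s,\max_i\|\cdot\|_V)$; comparing it with the variation-of-constants formula \eqref{eqn:mild} on $[t_n,t_n+c_ih]$ and splitting the integrand into the Lipschitz part $\sum_j\ell_j(\tau)\bigl(g(t_n+c_jh,U_j)-f(t_n+c_jh)\bigr)$ and the interpolation defect $\delta_n(\tau)\coloneqq f(t_n+\tau)-\sum_j\ell_j(\tau)f(t_n+c_jh)$, one sees its value is within $M\varepsilon_n+\OM_W(h)\,c_0h^{s-1}\|f^{(s)}\|_{\Ell^1([t_n,t_{n+1}],W)}+L\Lambda\OM_X(h)\,r$ of $u(t_n+c_ih)$; here $c_0$ depends only on $s$ and the nodes, and I used the Peano-kernel bound $\sup_{\tau\in[0,h]}\|\delta_n(\tau)\|_W\le c_0h^{s-1}\|f^{(s)}\|_{\Ell^1([t_n,t_{n+1}],W)}$ valid for $f\in\We^{s,1}([0,T],W)$. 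By the induction hypothesis $\varepsilon_n$ is already $O\bigl(h^{s-1}\OM_W(h)\|f^{(s)}\|_{\Ell^1}\bigr)$, hence $<r/(4M)$ for $h$ small, so that expression is $\le r$ and the right-hand side of \eqref{internalstage} maps the ball $\{U:\max_j\|U_j-u(t_n+c_jh)\|_V\le r\}$ into itself. Banach's theorem gives the unique $U_{n,1},\dots,U_{n,s}\in V$ in that ball, and subtracting \eqref{eqn:mild} from \eqref{internalstage} as above and absorbing the $L\Lambda\OM_X(h)$-term yields
\[
\max_i\|E_{n,i}\|_V\le 2M\varepsilon_n+2c_0\,\OM_W(h)\,h^{s-1}\|f^{(s)}\|_{\Ell^1([t_n,t_{n+1}],W)}.
\]

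For the global error, inserting $u$ into the outer update and subtracting gives $e_{n+1}=\e^{hA}e_n+B_n+\Delta_n$ with $B_n\coloneqq\int_0^h\e^{(h-\tau)A}\sum_j\ell_j(\tau)\bigl(g(t_n+c_jh,U_{n,j})-f(t_n+c_jh)\bigr)\dx[\tau]$ and $\Delta_n\coloneqq\int_0^h\e^{(h-\tau)A}\delta_n(\tau)\dx[\tau]$. Since $e_0=0$, unfolding yields $e_n=\sum_{k=0}^{n-1}\e^{(n-1-k)hA}(B_k+\Delta_k)$, and the decisive step is to push the leftover propagator \emph{inside} the integrals: $\e^{(n-1-k)hA}B_k=\int_0^h\e^{((n-k)h-\tau)A}\sum_j\ell_j(\tau)(\cdots)\dx[\tau]$, so $\|\e^{(n-1-k)hA}B_k\|_V\le L\Lambda\,\psi^X_{n-k}\max_j\|E_{k,j}\|_V$ with $\psi^X_m\coloneqq\int_{(m-1)h}^{mh}\RHO_X=\OM_X(mh)-\OM_X((m-1)h)$, and likewise $\|\e^{(n-1-k)hA}\Delta_k\|_V\le\psi^W_{n-k}\sup_\tau\|\delta_k(\tau)\|_W$. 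Because $\RHO_W$ is non-increasing, $\psi^W_m\le\OM_W(h)$ for every $m$, so summing in $k$ and using additivity of the $\Ell^1$-norm over the subintervals, the $\Delta$-part contributes at most $c_0\,\OM_W(h)\,h^{s-1}\|f^{(s)}\|_{\Ell^1([0,t_n],W)}$ to $\varepsilon_n$; the local piece of $\max_j\|E_{k,j}\|_V$ contributes a similar term carrying the harmless extra factor $\OM_X(h)\to 0$. Thus, with $\gamma_n\coloneqq C'h^{s-1}\OM_W(h)\|f^{(s)}\|_{\Ell^1([0,t_n],W)}$ non-decreasing in $n$,
\[
\varepsilon_n\le 2ML\Lambda\sum_{k=0}^{n-1}\psi^X_{n-k}\,\varepsilon_k+\gamma_n .
\]
Since $\sum_{m=1}^{n}\psi^X_m=\OM_X(t_n)\le\|\RHO_X\|_{\Ell^1(0,T)}$ and the convolution kernel $(\psi^X_m)_m$ is the discretisation of the $\Ell^1$-function $\RHO_X$ (whose Volterra convolution operator on $\Ell^1(0,T)$ is quasinilpotent), a weakly singular discrete Gronwall lemma applies and gives $\varepsilon_n\le C\gamma_n$ with $C$ depending only on $C_\OM$ (hence on $\RHO_X$ and $T$), $L$, $s$ and the $\ell_j$. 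This is \eqref{fehlerabschaetzung}; since the bound $\varepsilon_n\le C\gamma_n\le CC'h^{s-1}\OM_W(h)\|f^{(s)}\|_{\Ell^1([0,T],W)}$ is uniform in $n$ and tends to $0$ with $h$, also $\varepsilon_{n+1}<r/(4M)$ for $h$ small, closing the induction.

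The hard part is the global step: one must pass from a \emph{local} defect of size $O\bigl(h^{s-1}\OM_W(h)\bigr)$ to a global error of the \emph{same} size, without picking up the factor $N=T/h$ that a naive summation would produce. This is where the structural hypotheses of Assumption~\ref{cond:lin} are essential — the monotonicity of $\RHO_W$ for the bound $\psi^W_m\le\OM_W(h)$, and the $\Ell^1$-integrability of $\RHO_X$ for the quasinilpotence underlying the Gronwall step (the classical discrete Gronwall lemma does not suffice when $\RHO_X$ is genuinely singular). A secondary but real subtlety, already mentioned, is that solvability of \eqref{internalstage} and the error bound cannot be decoupled: the fixed-point ball has the \emph{fixed} radius $r$ while its natural centre $\bigl(u(t_n+c_jh)\bigr)_j$ lies in it only once $\varepsilon_n$ is known to be small — forcing the simultaneous induction on $n$.
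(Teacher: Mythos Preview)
Your proof is correct and follows essentially the same architecture as the paper's: simultaneous induction on $n$ coupling well-posedness of the internal stages with the error bound, the Banach fixed-point argument for \eqref{internalstage} on the closed ball of radius $r$ around $(u(t_n+c_jh))_j$ (this is the paper's Lemma~\ref{lmm:internalbound}), the split of the defect into a Lipschitz part and an interpolation/Taylor remainder (the paper's $\tilde\delta_{k+1}$ and $\delta_{k+1}$), the unfolding $e_n=\sum_{k}\e^{(n-1-k)hA}(\cdots)$, and a Gronwall-type closure.

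The one substantive difference is how you close the recursion. You push the entire propagator inside the integral and bound $\|\e^{(n-1-k)hA}B_k\|_V$ by $L\Lambda\,\psi^X_{n-k}\max_j\|E_{k,j}\|_V$ with $\psi^X_m=\int_{(m-1)h}^{mh}\RHO_X$, arriving at a discrete \emph{convolution} inequality for which you invoke an unstated weakly-singular Gronwall lemma. The paper instead estimates $\tilde\delta_{k+1}$ first in the $X$-norm, where $\int_0^h\|\e^{(h-\tau)A}\|_{L(X)}\dx[\tau]\le Mh$ supplies a benign factor $h$, and only afterwards applies $\|\e^{(n-k)hA}\|_{L(X,V)}$; this yields weights $h\|\e^{(n-k)hA}\|_{L(X,V)}$ with $\sum_k\le C_\OM$, and the elementary Gronwall inequality of Theorem~\ref{gronwall1} (plus $\prod(1+b_k)\le\exp(\sum b_k)$) closes the estimate. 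Both routes give the same final constant of order $\exp(c\,C_\OM)$; your version is a little more direct but imports an external lemma, while the paper's $X$-norm detour trades that lemma for one extra line of estimate and stays within the toolkit of the appendix.
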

It worths formulating the previous error estimate for the two special cases $W\in\{X,V\}$: For $W=V$,  we  can choose $\OM_V(h)=Mh$ and \eqref{fehlerabschaetzung} takes the form
\begin{align}\label{fehlerabschaetzung2}
 \|u_{n}-u(t_{n})\|_{V} \leq C \cdot h^{s} \| f^{(s)}\|_{\Ell^1([0,t_{n}],V)},
\end{align}
whereas for $W=X$ one relaxes the condition on $f$ and arrives at 
\begin{align}\label{fehlerabschaetzung3}
 \|u_{n}-u(t_{n})\|_{V} \leq C \cdot h^{s-1} \OM(h)\| f^{(s)}\|_{\Ell^1([0,t_{n}],X)}.
\end{align}
Suppose that $V$ equals the domain of the fractional power $(-A)^\alpha$ of the negative generator $-A$, where $\alpha\in (0,1)$ and $A$ is assumed to be the generator of an analytic semigroup if $\alpha>0$, see Example \ref{examp:bddanasgrp} above. This is setting of the paper  \cite{HO05} by Hochbruck and Ostermann. In this case we can take $\RHO(h)=ch^{-\alpha}$ and hence $\OM(h)=ch^{1-\alpha}$.
So the error estimate from Theorem  \ref{thmsemilinear} takes the form
\begin{align}\label{fehlerabschaetzung4}
 \|u_{n}-u(t_{n})\|_{V} \leq C \cdot h^{s-\alpha} \| f^{(s)}\|_{\Ell^1([0,t_{n}],X)}.
\end{align}
The paper \cite{HO05}  states the estimate (see (22) therein)
\begin{align}\label{fehlerabschaetzung5}
 \|u_{n}-u(t_{n})\|_{V} \leq C \cdot h^{s} \sup_{s\in [0,T]}\| f^{(s)}\|_X.
\end{align}
(Note that if $\alpha=0$, i.e., $X=V$ the proof in \cite{HO05} works also for non-analytic semigroups and the two bounds in \eqref{fehlerabschaetzung3} and \eqref{fehlerabschaetzung5} coincide.)
Our abstract approach does not recover the result in  \cite{HO05} as a special case, but we can remark the main novelty here: We do not require $V$ being a subspace of $X$, this allows for a larger flexibility. Problems that fit into this setting, beside the case of analytic semigroups, include non-analytic Ornstein-Uhlenbeck semigroups perturbed by non-linear potentials, Navier-Stokes equations in 3D, incompressible 3D flows around rotation obstacles, wave equation with a nonlinear damping, see  Section \ref{sec:Ex}.

The structure of the paper is as follows.
The proof of  Theorem \ref{thmsemilinear}. takes up the next section. To make the paper as self-contained as possible, some auxiliary results concerning Lagrange interpolation and Gronwall's lemma, are recalled in the Appendix. 
Finally, in Section \ref{sec:Ex} various examples, mentioned above, are presented.

\section{Proof of Theorem \ref{thmsemilinear}}\label{sec:Runge}

This section is devoted to the proof of Theorem \ref{thmsemilinear}.
We remark that for $s=1$  many of the sums in the following proof are empty, so equal $0$, and that the Lagrange ``interpolation'' polynomial $\ell_1\equiv 1$.
Let the initial value problem \eqref{semilinear} satisfy Assumptions \ref{cond:lin} and  \ref{cond:sol} with constants $M$, $r$
 and $L$. Further, let $\cell >0$ be given by Lemma \ref{lagrange2}, i.e., for all $h>0$
 \[
 |\ell_{i}(\tau)| \leq C_{\ell} \quad \text{for all } i \in \{1,\dots,s\} \text{ and for all } \tau \in [0,h].
 \]

For $n\in \mathbb N$, $h>0$  we define $C_{f,W}(n,h)$  by
\begin{align*}
C_{f,W}(n,h)&\coloneqq \int_0^h \|\e^{t A}\|_{L(W,V)} \dx[t]\, \| f^{(s)}\|_{\Ell^1([0,t_n],W)}=\OM_W(h)\| f^{(s)}\|_{\Ell^1([0,t_n],W)}\\
&\leq \OM_W(h)\| f^{(s)}\|_{\Ell^1([0,T],W)}.
\end{align*}

For the proof of Theorem \ref{thmsemilinear} the following lemmas are needed. Recall that $\OM_W(h),\OM(h)\to 0$ for $h\to 0$, so $C_{f,W}(n,h)\to 0$ for $h\to 0$ uniformly in $n\leq T/h$.

\begin{lemma}\label{lmm:internalbound}
Let $C>0$, $ \mathfrak{h}>0$ with 
\begin{align*}
MC   \mathfrak{h}^{s-1}C_{f,W}(n,\mathfrak{h}) +  \OM( \mathfrak{h})( s\cell +1)\max_{(t,y) \in S_r}\|g(t,y)\|_X \le r\quad\text{and}\quad \OM(\mathfrak{h}) \cell s L  <1.
 \end{align*} 
Suppose that for fixed $n\in\mathbb N$, $u_n\in V$ and $h\in (0,\mathfrak{h})$ with $(n+1)h\le T$ we have
\begin{align}\label{fehlerabschaetzung_n}
\|u_n-u(t_n)\|_V \le C h^{s-1}   C_{f,W}(n,h). 
\end{align}
Then, the equation \eqref{internalstage} has a unique solution $U_{n,1},\ldots, U_{n,s}\in V$ satisfying $(t_n+c_jh,U_{n,j})\in S_r$ for $j=1,\ldots, s$.
\end{lemma}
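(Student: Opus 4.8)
The plan is to set up a fixed-point argument for the map defined by the right-hand side of \eqref{internalstage} on a suitable closed ball in $V^s$, and to verify the self-mapping and contraction properties using the two smallness hypotheses. Concretely, I would define $\Phi\colon V^s\to V^s$ by
\[
\Phi(Y)_i \coloneqq \e^{c_ihA}u_n + \int_0^{c_ih}\e^{(c_ih-\tau)A}\sum_{j=1}^s \ell_j(\tau) g(t_n+c_jh, Y_j)\dx[\tau]
\]
for $Y=(Y_1,\dots,Y_s)$, and work on the set
\[
B \coloneqq \bigl\{\, Y\in V^s \;\big|\; \|Y_j - u(t_n+c_jh)\|_V \le r\ \text{for } j=1,\dots,s \,\bigr\},
\]
which is a nonempty closed subset of the Banach space $V^s$ (with, say, the max-norm over components); note that $(t_n+c_jh, Y_j)\in S_r$ for all $Y\in B$, so $g$ is bounded by $\max_{(t,y)\in S_r}\|g(t,y)\|_X$ and Lipschitz with constant $L$ there.

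First I would establish the self-mapping property $\Phi(B)\subseteq B$. Fix $Y\in B$ and estimate $\|\Phi(Y)_i - u(t_n+c_ih)\|_V$. The natural comparison is with the exact variation-of-constants identity for $u$ at time $t_n+c_ih$, i.e. writing $u(t_n+c_ih) = \e^{c_ihA}u(t_n) + \int_0^{c_ih}\e^{(c_ih-\tau)A} f(t_n+\tau)\dx[\tau]$; subtracting gives three contributions: (i) $\e^{c_ihA}(u_n-u(t_n))$, bounded in $V$ by $M\|u_n-u(t_n)\|_V \le M C h^{s-1} C_{f,W}(n,h)$ using hypothesis \eqref{fehlerabschaetzung_n} and $\|\e^{c_ihA}\|_{L(V)}\le M$; (ii) the quadrature error $\int_0^{c_ih}\e^{(c_ih-\tau)A}\bigl(\sum_j \ell_j(\tau)f(t_n+c_jh) - f(t_n+\tau)\bigr)\dx[\tau]$, which, by the Lagrange interpolation remainder (the appendix lemma controlling $|\ell_i|\le C_\ell$ and hence the interpolation error in terms of $f^{(s)}$), is bounded using $\|\e^{tA}\|_{L(W,V)}\le\RHO_W(t)$ and an $h^{s-1}$ factor from the interpolation remainder, producing a term of the form $\text{const}\cdot h^{s-1}\OM_W(c_ih)\|f^{(s)}\|_{\Ell^1}$, which I would absorb into $MC h^{s-1} C_{f,W}(n,h)$ by taking $C$ large — here one must be slightly careful that the constant is consistent with the one quantified in the lemma's hypothesis, so this really is a constraint linking $C$ to $s,\ell_i,\RHO_W$; and (iii) the term $\int_0^{c_ih}\e^{(c_ih-\tau)A}\sum_j \ell_j(\tau)\bigl(g(t_n+c_jh,Y_j) - f(t_n+c_jh)\bigr)\dx[\tau]$, bounded via $\|\e^{(c_ih-\tau)A}\|_{L(X,V)}\le\RHO(c_ih-\tau)$, $|\ell_j(\tau)|\le C_\ell$, and the bound $\|g(t_n+c_jh,Y_j)\|_X\le\max_{S_r}\|g\|_X$ plus $\|f(t_n+c_jh)\|_X\le\max_{S_r}\|g\|_X$, giving at most $\OM(c_ih)(sC_\ell+1)\max_{S_r}\|g\|_X$. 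Wait — more simply, combining (ii) and (iii) one can just bound $\sum_j\ell_j(\tau)g(t_n+c_jh,Y_j)$ directly in $X$-norm against $(sC_\ell+1)\max_{S_r}\|g\|_X$ after comparing with $f(t_n+\tau)$; in any case the sum of all contributions is at most $MCh^{s-1}C_{f,W}(n,h) + \OM(h)(sC_\ell+1)\max_{(t,y)\in S_r}\|g(t,y)\|_X$, which is $\le r$ by the first hypothesis (using $\OM(c_ih)\le\OM(h)\le\OM(\mathfrak h)$ by monotonicity and $h<\mathfrak h$, and $C_{f,W}(n,h)\le C_{f,W}(n,\mathfrak h)$). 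Hence $\Phi(B)\subseteq B$.

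Next I would prove $\Phi$ is a contraction on $B$. For $Y,Z\in B$,
\[
\|\Phi(Y)_i - \Phi(Z)_i\|_V \le \int_0^{c_ih}\|\e^{(c_ih-\tau)A}\|_{L(X,V)}\sum_{j=1}^s|\ell_j(\tau)|\,\|g(t_n+c_jh,Y_j)-g(t_n+c_jh,Z_j)\|_X\dx[\tau],
\]
and using $\|\e^{tA}\|_{L(X,V)}\le\RHO(t)$, $|\ell_j|\le C_\ell$, and \eqref{eqn:lipschitz} this is $\le \OM(c_ih)\,C_\ell\, L\sum_{j=1}^s\|Y_j-Z_j\|_V \le \OM(\mathfrak h)\,C_\ell\, s\, L\,\max_j\|Y_j-Z_j\|_V$; taking the max over $i$ gives the contraction constant $\OM(\mathfrak h)C_\ell s L < 1$ by the second hypothesis. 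By Banach's fixed point theorem $\Phi$ has a unique fixed point in $B$, which is precisely the unique solution $U_{n,1},\dots,U_{n,s}\in V$ of \eqref{internalstage} with $(t_n+c_jh,U_{n,j})\in S_r$. (Uniqueness within $V^s$ rather than just within $B$ is not claimed — the statement only asks for a unique solution lying in $S_r$, which is exactly what the ball $B$ encodes.) The main obstacle is bookkeeping in the self-mapping step: one must organize the three error contributions so that the interpolation-remainder constant coming from the appendix lemma is genuinely absorbed by the same $C$ appearing in the hypotheses, and make sure every appearance of $\OM$ at argument $c_ih$ or $h$ is dominated by $\OM(\mathfrak h)$; the contraction step is routine.
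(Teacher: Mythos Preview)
Your proposal is correct and, once you follow your own ``wait --- more simply'' observation, it is exactly the paper's argument: define $\Phi$ on the product ball $B$, bound $\|\Phi(Y)_i-u(t_n+c_ih)\|_V$ by $M\|u_n-u(t_n)\|_V+\OM(h)(sC_\ell+1)\max_{S_r}\|g\|_X$, and get the contraction constant $\OM(\mathfrak h)C_\ell sL<1$ via the Lipschitz estimate.

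The ``main obstacle'' you flag is illusory. The paper never splits off an interpolation-remainder term (ii); it simply bounds $\bigl\|\sum_j\ell_j(\tau)g(t_n+c_jh,Y_j)-f(t_n+\tau)\bigr\|_X\le (sC_\ell+1)\max_{S_r}\|g\|_X$ crudely, since $f(t_n+\tau)=g(t_n+\tau,u(t_n+\tau))$ is itself a value of $g$ on $S_r$. So nothing needs to be ``absorbed into $C$'': the only role of $C$ in the self-mapping step is through the hypothesis $\|u_n-u(t_n)\|_V\le Ch^{s-1}C_{f,W}(n,h)$, and the first smallness assumption on $\mathfrak h$ is tailored precisely to this crude bound. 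Drop the three-term decomposition and the proof is clean.
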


\begin{proof}
 The main idea is to show existence of $U_{n,1},\ldots, U_{n,s}\in V$ by means of Banach's fixed point theorem. We equip $V^s$ with the maximum norm over the norms of its $s$ components.
For  $i=1,\ldots, s$ we define
\begin{equation*}
Y_h^i \coloneqq \{ v\in V \, | \, \|u(t_{n}+c_{i}h)-v\|_V \leq r\},
\end{equation*}
 and $Y_h\coloneqq Y_h^1 \times \cdots \times Y_h^s\subset V^s$.
Further,  let $\Phi_{h}\colon Y_h \to Y_h$ defined by
 \begin{align*}
 (\Phi_{h}(x_{1},x_{2},\dots ,x_{s}))_i &= \e^{c_{i}hA}u_{n}+ \int_0^{c_{i}h} \e^{(c_{i}h-\tau)A}\sum_{j=1}^s \ell_{j}(\tau)g(t_{n}+c_{j}h,x_{j}) \dx[\tau].
 \end{align*}
First, we show that $\Phi_h(x)\in Y_h$ for $x\in Y_h$. Indeed this follows from the calculation
 \begin{align*}
 \MoveEqLeft[1] \| \Phi_{h}(x)_{i}- u(t_{n}+c_{i}h)\|_V \\
 &= \left\| \e^{c_{i}hA}u_{n}+ \int_0^{c_{i}h} \e^{(c_{i}h-\tau)A}\sum_{j=1}^s \ell_{j}(\tau)g(t_{n}+c_{j}h,x_{j})\dx[\tau]\right. \\
 &\hspace*{2cm} -\left. \e^{c_{i}hA}u(t_{n})- \int_0^{c_{i}h} \e^{(c_{i}h-\tau)A}g(t_{n}+\tau,u(t_{n}+\tau))\dx[\tau] \right\|_V \\
 &\leq M\|u_{n}-u(t_{n})\|_V + \OM(h)( s\cell +1) \max_{(s,y) \in S_r}\|g(s,y)\|_X \\
 &\le MC h^{s-1}   C_{f,W}(n,h)  +\OM(h)( s\cell +1) \max_{(s,y) \in S_r}\|g(s,y)\|_X\\
&\le r.
 \end{align*}
Next, we show that $\Phi_h$ is a strict contraction. Let $x=(x_{1},\dots,x_{s}), \tilde{x}=(\tilde{x}_{1},\dots,\tilde{x}_{s})\in Y_h$. 
Since $g$ is Lipschitz continuous on $S_r$ we obtain
 \begin{align*}
 \| \Phi_{h}(x)- \Phi_{h}(\tilde{x})\|_{V^s} &= \max_{i \in \{1,\dots,s\}}\| (\Phi_{h}(x))_i-(\Phi_{h}(\tilde{x})_i\|_V\\
 &= \max_{i \in \{1,\dots,s\}}\left\| \int_0^{c_{i}h} \e^{(c_{i}h-\tau)A}\sum_{j=1}^s \ell_{j}(\tau) \bigl( g(t_{n}+c_{j}h, x_{j})- g (t_{n}+c_{j}h, \tilde{x}_{j})\bigr) \dx[\tau]\right\|_V \\
 &\leq \max_{i \in \{1,\dots,s\}} \OM(h) \cell s\max_{j \in \{1,\dots,s\}} \left\| g(t_{n}+c_{j}h,x_{j})-g(t_{n}+c_{j}h,\tilde{x}_{j}) \right\|_X \\
 &\leq \OM(h) \cell s L  \|x-\tilde{x}\|_{V^s}
 \end{align*}
Thus the statement follows by Banach's fixed point theorem and the definition of $Y_h$.
\end{proof}

 \begin{proof}[Proof of Theorem \ref{thmsemilinear}]
   Let $h_0>0$ such that $\OM(h_0) \cell s L  \le \frac{1}{2}$ (possible by $\OM(h)\to 0$ for $h\to 0$).

   Plainly, there exists a constant $C_F>0$ such that for $n\in\mathbb N$, $\xi\in [t_n,t_{n+1}]$ and $i=1,\ldots,s$ we have
 \begin{align}\label{eqn:frac}
 \left| \frac{(t_{n}+c_i h-\xi)^{s-1}}{(s-1)!}\right|\le C_F h^{s-1}.
 \end{align}
We define
\begin{align*}
C_{G,1} &\coloneqq 2sM^2 \cell L\max\{M,\OM(h_0)\},\\
C_{G,2} &\coloneqq 2\max\left\{2\cell^2  Ls^2C_F(\OM(h) + M  C_\OM),  M \cell sC_F \right\} ,\\
C&\coloneqq C_{G,2} \exp(C_{G,1}C_\Omega +C_{G,1}).
\end{align*}
Let $ \mathfrak{h}\in(0,h_0)$ with 
\begin{align*}
MC \mathfrak{h}^{s-1} C_{f,W}(n,\mathfrak{h})  +  ( s\cell +1) \max_{(s,y) \in S_r}\|g(s,y)\|_X \OM( \mathfrak{h})\le r.
\end{align*}

It suffices to show  the following statement by induction over $n\in \mathbb N$, $n\ge 0$:
 \begin{quote}
 For $h\in(0,\mathfrak{h})$ and $n\in\mathbb N$ with $(n+1)h\le T$ equation \eqref{internalstage} has a unique solution $U_{n,1},\ldots, U_{n,s}\in V$ satisfying $(t_{n}+c_jh,U_{n,j})\in S_r$ for $j=1,\dots,s$, and
\begin{align}\label{fehlerabschaetzung_kurz}
\|u_n-u(t_n)\|_V \le C h^{s-1}   C_{f,W}(n,h). 
\end{align} 
\end{quote}

If $n=0$, then $u_0=u(t_0)$. Thus the norm estimate of $ \|u_{0}-u(t_{0})\|_{V} $ is trivial and the unique existence of $U_{0,1},\ldots, U_{0,s}\in V$ satisfying $(t_{0}+c_jh,U_{0,j})\in S_r$ follows from Lemma \ref{lmm:internalbound}.

Next, we assume that the statement holds for $0,\ldots,n$, for some $n\in\mathbb N$, and we aim to show the statement for $n+1$ with $(n+1)h\leq T$.
For $k=0,\ldots,n$ let 
\[
e_{k}\coloneqq u_{k}-u(t_{k}),\qquad E_{k,i} \coloneqq U_{k,i} - u(t_{k}+c_{i}h)
\]
(recall $t_k=kh$). We divide the proof in several steps.

\step{}\label{taylor}
We show
\begin{align*}
\sum_{j=1}^s &\ell_j(\tau) g(t_n+c_jh, U_{n,j})
 - g(t_n+\tau,u(t_n+\tau))\\
&=\sum_{j=1}^s \ell_j(\tau) (g(t_n+c_jh,U_{n,j}) - f(t_n + c_jh))
 + \sum_{j=1}^s \ell_j(\tau)\int_{t_n+\tau}^{t_n + c_jh} \frac{(t_n+ c_jh - \xi)^{s-1}}{(s-1)!} f^{(s)}(\xi) \dx[\xi].
\end{align*}
We can write
 \begin{align}
 \notag\sum_{j=1}^s& \ell_j(\tau) g(t_n+c_jh, U_{n,j})-g(t_n+\tau,u(t_n+\tau))\\
 \label{eq:step1}& =\sum_{j=1}^s \ell_j(\tau) \Bigl(g(t_n+c_jh,U_{n,j}) - f(t_n + c_jh) + f(t_n + c_jh) - f(t_n + \tau) \Bigr)
 \end{align}
 as $ \sum_{j=1}^s \ell_j(\tau) =1$ (see Lemma \ref{lagrange}).
 Taylor expansion yields
 \begin{align*}
 f(t_n + c_jh) - f(t_n+\tau) &= \sum_{k=1}^{s-1} \frac{f^{(k)}(t_n+\tau)}{k!} (c_jh-\tau)^k \\
 &\hspace*{1cm} +\int_{t_n+\tau}^{t_n + c_jh} \frac{(t_n+ c_jh - \xi)^{s-1}}{(s-1)!} f^{(s)}(\xi) \dx[\xi].
 \end{align*}
 Recall the following property of Lagrange interpolation polynomials, see \eqref{null} in Lemma \ref{lagrange}: For $k\leq s-1$
 \begin{equation}\label{eq:lagrangeid}
 \sum_{j=1}^s \ell_j(\tau)(c_jh-\tau)^k=0.
 \end{equation}
 
Inserting this into the equation \eqref{eq:step1} above finishes Step \ref{taylor} as
\begin{align*}
 \sum_{j=1}^s& \ell_j(\tau) g(t_n+c_jh, U_{n,j}) -g(t_n+\tau,u(t_n+\tau))\\
 & =\sum_{j=1}^s \ell_j(\tau) (g(t_n+c_jh,U_{n,j}) - f(t_n + c_jh))\\
 &\quad + \sum_{j=1}^s \ell_j(\tau)\left( \sum_{k=1}^{s-1} \frac{f^{(k)}(t_n+\tau)}{k!} (c_jh-\tau)^k\right. +\left.\int_{t_n+\tau}^{t_n + c_jh} \frac{(t_n+ c_jh - \xi)^{s-1}}{(s-1)!} f^{(s)}(\xi) \dx[\xi]\right)\\
 &=\sum_{j=1}^s \ell_j(\tau) (g(t_n+c_jh,U_{n,j}) - f(t_n + c_jh))\\
 &\quad +\sum_{k=1}^{s-1} \frac{f^{(k)}(t_n+\tau)}{k!} \underset{\qquad =\,0 \text{ by \eqref{eq:lagrangeid}}}{\underbrace{\sum_{j=1}^s \ell_j(\tau)(c_jh-\tau)^k}} + \sum_{j=1}^s \ell_j(\tau)\int_{t_n+\tau}^{t_n + c_jh} \frac{(t_n+ c_jh - \xi)^{s-1}}{(s-1)!} f^{(s)}(\xi) \dx[\xi].
 \end{align*}
\step{}   Let $\tilde{\delta}_{k+1}$ and $\delta_{k+1}$, $k=0,\dots,n$, be given by 
 \begin{align}
 \tilde{\delta}_{k+1}&\coloneqq \int_0^{h} \e^{(h-\tau)A} \sum_{i=1}^s \ell_i(\tau) (g(t_k+c_ih,U_{k,i}) - f(t_k + c_ih))\dx[\tau],\label{tildedelta_n}\\
 \delta_{k+1} &\coloneqq \int_0^h \e^{(h-\tau)A} \sum_{i=1}^s \ell_i(\tau)\int_{t_k+\tau}^{t_k + c_ih} \frac{(t_k+ c_ih - \xi)^{s-1}}{(s-1)!} f^{(s)}(\xi) \dx[\xi] \dx[\tau].\label{delta_n}
\end{align}
Then Step \ref{taylor} implies
 \begin{align*}
 e_{n+1} &= u_{n+1}-u(t_{n+1}) \\
 &= \e^{hA}e_n + \int_0^h \e^{(h-\tau)A} \sum_{i=1}^s \ell_i(\tau) \left( g(t_n+c_ih,U_{n,i}) - f(t_n +\tau) \right) \dx[\tau] \\
 &= \e^{hA}e_n + \int_0^{h} \e^{(h-\tau)A} \left(\sum_{i=1}^s \ell_i(\tau) (g(t_n+c_ih,U_{n,i}) - f(t_n + c_ih))\right) \dx[\tau]\\
 &\hspace*{1.7cm} + \int_0^{h} \e^{(h-\tau)A} \left( \sum_{i=1}^s \ell_i(\tau)\int_{t_n+\tau}^{t_n + c_ih} \frac{(t_n+ c_ih - \xi)^{s-1}}{(s-1)!} f^{(s)}(\xi) \dx[\xi]\right) \dx[\tau]\\
 &=\e^{hA}e_n + \tilde{\delta}_{n+1} + \delta_{n+1}.
 \end{align*}
Solving the recursion yields:
 \begin{align*}
 e_{n+1} &= \e^{hA}e_{n} + \tilde{\delta}_{n+1} + \delta_{n+1}\\
 &= \e^{hA}\left( \e^{hA}e_{n-1}+\tilde{\delta}_{n} + \delta_{n}\right) + \tilde{\delta}_{n+1} + \delta_{n+1}\\
 &= \sum_{k=0}^{n} \e^{khA} \left(\tilde{\delta}_{n+1-k} + \delta_{n+1-k}\right),
 \end{align*}
 as $e_0=0$. Hence
 \begin{align}
 \left\| e_{n+1}\right\|_{V} \nonumber
 &\leq \sum_{k=0}^{n}(\|\e^{khA}\tilde{\delta}_{n+1-k}\|_{V} + \| \e^{khA}\delta_{n+1-k}\|_{V})\nonumber\\
 &=  \sum_{k=0}^{n}(\|\e^{(n-k)hA}\tilde{\delta}_{k+1}\|_V + \|\e^{(n-k)hA} \delta_{k+1}\|_V).\label{eqn:absch}
 \end{align}
We will estimate the norms on the right-hand in \eqref{eqn:absch} side separately.
\step{} \label{part1} We start by bounding the Taylor-remainders  for each fixed $i$.
For $k=0,\ldots,n$ and $i=1,\ldots,s$ we define 
\begin{align}\label{def:Delta}
\Delta_{k,i} \coloneqq \int_0^{c_ih} \e^{(c_ih-\tau)A}\sum_{j=1}^s \ell_j(\tau)\int_{t_k+\tau}^{t_k + c_jh} \frac{(t_k+ c_jh - \xi)^{s-1}}{(s-1)!} f^{(s)}(\xi) \dx[\xi]\dx[\tau]
\end{align}
and  estimate
 \begin{align}
 \| \Delta_{k,i}\|_{V}
 &\leq \int_0^{c_ih} \| \e^{(c_ih-\tau)A}\|_{L(W,V)} \sum_{j=1}^s \left| \ell_{j}(\tau)\right| \left\| \int_{t_{k}+\tau}^{t_{k}+c_{j}h} \frac{(t_{k}+c_{j}h-\xi)^{s-1}}{(s-1)!}f^{(s)}(\xi) \dx[\xi]\right\|_{W}\dx[\tau]\nonumber\\
 &\leq  \cell \left(\int_0^{h} \| \e^{\tau A}\|_{L(W,V)} \dx[\tau] \right)  \sum_{j=1}^s \int_{t_{k}}^{t_{k}+h}\left| \frac{(t_{k}+c_{j}h-\xi)^{s-1}}{(s-1)!}\right| \|f^{(s)}(\xi)\|_{W} \dx[\xi]\nonumber\\
 &\overset{\eqref{eqn:frac}}{\leq} \cell sC_F  h^{s-1} \left(\int_0^{h} \| \e^{\tau A}\|_{L(W,V)} \dx[\tau]\right) \| f^{(s)}\|_{\Ell^1([t_{k},t_{k+1}],W)}.\label{eqndelta}
 \end{align}

\step{} \label{part1b}
We now consider the norm of $\delta_{k+1}$ (see \eqref{delta_n}).
We estimate as in Step \ref{part1} 
 \begin{align*}
 \|\delta_{k+1}\|_V &\le \cell sC_F  h^{s-1} \left(\int_0^{h} \| \e^{\tau A}\|_{L(W,V)} \dx[\tau]\right) \| f^{(s)}\|_{\Ell^1([t_{k},t_{k+1}],W)},
\end{align*}
and obtain 
\begin{align}
  \sum_{k=0}^{n}\|\e^{(n-k)hA}\delta_{k+1}\|_V &\le \sum_{k=0}^{n} M \cell sC_F  h^{s-1} \left(\int_0^{h} \| \e^{\tau A}\|_{L(W,V)} \dx[\tau]\right) \| f^{(s)}\|_{\Ell^1([t_{k},t_{k+1}],W)}\nonumber \\
&\le  \frac{1}{2}C_{G,2}  h^{s-1} C_{f,W}(n+1,h).\label{eqn:delta}
 \end{align}
 \step{} \label{EstE} We prove
 \begin{align}\label{eqn:Eni}
 \sum_{i=1}^s \|E_{k,i}\|_{V} \leq 2sM\|e_{k}\|_{V} + 2\cell s^2C_F  h^{s-1} \left(\int_0^{h} \| \e^{\tau A}\|_{L(W,V)} \dx[\tau]\right) \| f^{(s)}\|_{\Ell^1([t_{k},t_{k+1}],W)}.
\end{align}
 Thanks to Step~\ref{taylor} we can calculate
 \begin{align*}
E_{k,i} = U_{k,i} - u(t_k+c_ih)\\
 &= \e^{c_ihA} u_k +\int_0^{c_ih} \e^{(c_ih-\tau)A} \sum_{j=1}^s \ell_j(\tau) g(t_k+c_jh, U_{k,j})\dx[\tau]\\
 &\qquad - \left( \e^{c_ihA} u(t_k) +\int_0^{c_ih} \e^{(c_ih-\tau)A}g(t_k+\tau,u(t_k+\tau))\dx[\tau]\right)\\
 &=\e^{c_ihA}e_k + \int_0^{c_ih} \e^{(c_ih-\tau)A} \sum_{j=1}^s \ell_j(\tau) (g(t_k+c_jh,U_{k,j}) - f(t_k + c_jh))\dx[\tau]+ \Delta_{k,i},
\end{align*}
where $\Delta_{k,i}$ is given by \eqref{def:Delta}.
Using the Lipschitz-continuity of $g$ on $S_r$, we obtain
 \begin{align*}
 \MoveEqLeft[3.5]\left\| E_{k,i}\right\|_V \leq M\|e_k\|_V + \int_0^{c_{i}h} \left\|\e^{(c_ih-\tau)A} \right\|_{L(X,V)} \sum_{j=1}^s \left|\ell_j(\tau)\right|\\
 &\qquad \cdot \underset{\leq \, L \cdot \|E_{k,j}\|_V\, \text{ by }\eqref{eqn:lipschitz}}{\underbrace{\| g(t_k+c_jh,U_{k,j}) - g(t_k+c_jh,u(t_k+c_jh))\|_X}}\dx[\tau] + \left\| \Delta_{k,i}\right\|_V\\
 &\leq M\|e_{k}\|_{V} + \OM(h_0)\cell L \sum_{j=1}^s \left\| E_{k,j}\right\|_{V} + \left\|\Delta_{k,i}\right\|_{V}.
 \end{align*}
 Thus, using $\OM(h_0) \cell s L  \le \frac{1}{2}$ we conclude 
 \begin{align*}
 \sum_{i=1}^s \left\| E_{k,i}\right\|_{V} &\leq sM\|e_{k}\|_{V} + \frac{1}{2} \sum_{j=1}^s \left\| E_{k,j}\right\|_{V}+ \sum_{i=1}^s \|\Delta_{k,i}\|_{V}.
 \end{align*}
This together with \eqref{eqndelta} implies \eqref{eqn:Eni}.
 \step{} \label{part1c}
For $k=0,\ldots,n$, we now investigate the norm of $\tilde\delta_{k+1}$ (see \eqref{tildedelta_n}).
Using \eqref{eqn:Eni}, we obtain
\begin{align*}
\| \tilde{\delta}_{k+1}&\|_V \le \OM(h)\cell  L\sum_{i=1}^s \|E_{k,i}\|_{V}\nonumber\\
& \leq\OM(h)\cell  L\left(
2sM\|e_{k}\|_{V} + 2\cell s^2C_F  h^{s-1} \left(\int_0^{h} \| \e^{\tau A}\|_{L(W,V)} \dx[\tau]\right) \| f^{(s)}\|_{\Ell^1([t_{k},t_{k+1}],W)}\right).
\end{align*}
and
\begin{align*}
\|\tilde\delta_{k+1}&\|_X\le Mh\cell  L\sum_{i=1}^s \|E_{n,i}\|_{V}\nonumber\\
&\le  Mh\cell  L  \left(
2sM\|e_{k}\|_{V} + 2\cell s^2C_F  h^{s-1} \left(\int_0^{h} \| \e^{\tau A}\|_{L(W,V)} \dx[\tau]\right) \| f^{(s)}\|_{\Ell^1([t_{k},t_{k+1}],W)}\right).\nonumber
 \end{align*}
Thus, using $\|\e^{(n-k)hA}\|_{L(X,V)} h \le C_\OM$ 
we have
\begin{align}
  \sum_{k=0}^{n}&\|\e^{(n-k)hA}\tilde\delta_{k+1}\|_V\nonumber\\
&\le\|\tilde\delta_{n+1}\|_V + \sum_{k=0}^{n-1}\|\e^{(n-k)hA}\|_{L(X,V)} \|\tilde\delta_{k+1}\|_X
\nonumber\\
& \leq\OM(h)\cell  L\left(
2sM\|e_{n}\|_{V} + 2\cell s^2C_F  h^{s-1} \left(\int_0^{h} \| \e^{\tau A}\|_{L(W,V)} \dx[\tau]\right) \| f^{(s)}\|_{\Ell^1([t_{n},t_{n+1}],W)}\right)\nonumber\\
& \quad + Mh\cell  L  
2sM\sum_{k=0}^{n-1}\|\e^{(n-k)hA}\|_{L(X,V)}\|e_{k}\|_{V}\nonumber \\
& \quad + 2Mh\cell  L\cell s^2C_F  h^{s-1}\|\e^{(n-k)hA}\|_{L(X,V)} \left(\int_0^{h} \| \e^{\tau A}\|_{L(W,V)} \dx[\tau]\right) \| f^{(s)}\|_{\Ell^1([0,t_{n}],W)}\nonumber\\
& \leq  C_{G,1}\sum_{k=0}^{n-1}\|\e^{(n-k)hA}\|_{L(X,V)}h\|e_{k}\|_{V} +C_{G,1}\|e_{n}\|_{V} 
 +\frac{1}{2}C_{G,2} h^{s-1}C_{f,W}(n+1,h). \label{eqn:tildedeltaX}
 \end{align}

\step{} In the final step we estimate $e_{n+1}$. 
 Using \eqref{eqn:absch}, \eqref{eqn:delta} and \eqref{eqn:tildedeltaX}
 we obtain 
 \begin{align*}
 \left\| e_{n+1}\right\|_{V}&\le   \sum_{k=0}^{n}(\|\e^{(n-k)hA}\tilde{\delta}_{k+1}\|_V + \|\e^{(n-k)hA} \delta_{k+1}\|_V)\\
&\le    C_{G,1}\sum_{k=0}^{n-1}\|\e^{(n-k)hA}\|_{L(X,V)}h\|e_{k}\|_{V} +C_{G,1}\|e_{n}\|_{V} 
 +C_{G,2} h^{s-1}C_{f,W}(n+1,h),
\end{align*}
and Gronwall's Lemma (see Theorem \ref{gronwall1}) implies 
 \begin{align*}
 \| e_{n+1}\|_V &\leq C_{G,2}(1+ C_{G,1})\prod_{k=0}^{n-1}(1+C_{G,1}\|\e^{(n-k)hA}\|_{L(X,V)}h) h^{s-1} C_{f,W}(n+1,h)\\
&\leq C_{G,2} \exp\Bigl(C_{G,1} \sum_{k=0}^{n-1}\|\e^{(n-k)hA}\|_{L(X,V)}h +C_{G,1}\Bigr) h^{s-1} C_{f,W}(n+1,h)\\
&\leq C_{G,2} \exp\Bigl(C_{G,1}C_\Omega +C_{G,1}\Bigr) h^{s-1} C_{f,W}(n+1,h)\\
&=C\OM_W(h)h^{s-1}\| f^{(s)}\|_{\Ell^1([0,t_{n+1}],W)}.
 \end{align*}
\item By Lemma \ref{lmm:internalbound} applied to $n+1$, for $h\in(0,\mathfrak{h})$ if $(n+1)h\leq T$ equation \eqref{internalstage} has a unique solution $U_{n+1,1},\ldots, U_{n+1,s}\in V$ satisfying $(t_{n+1}+c_jh,U_{n+1,j})\in S_r$. \qedhere
\end{proof}

\section{Examples}\label{sec:Ex}

We present here examples for the situation described in Assumption \ref{cond:lin} and also for some admissible non-linearities satisfying Assumption \ref{cond:sol}.

\begin{example}[Gaussian heat semigroup]\label{examp:Gaussian}
	Consider the Gaussian heat semigroup $(\ee^{tA})_{t\geq 0}$ on $\Ell^2(\RR^d)$, for $t>0$ given as
\[
\ee^{tA}f\coloneqq g_t*f,
\]
where $g_t(x)=(4\pi t)^{-d/2}\ee^{-\frac{|x|^2}{4t}}$, $x\in \RR^d$, is the Gaussian kernel. Then, actually, $(\ee^{tA})_{t\geq 0}$ yields a consistent family of analytic $C_0$-semigroups on the whole $\Ell^p(\RR^d)$-scale, $p\in [1,\infty)$.  A short calculation using the Young convolution inequality yields for $1<p\leq r<\infty$ and $f\in \Ell^p(\RR^d)$ that
\[
\|\ee^{tA}f\|_{r}\leq c_{p,r} t^{-\frac{d}{2}(\frac1p-\frac1r)}\|f\|_p
\]
with an absolute constant $c_{p,r}$ (whose optimal value can be determined, cf.~\cite{BecknerAM}). As usual, we shall refer to this phenomenon as $\Ell^p$-$\Ell^r$-smoothing. We conclude that the choices $X=\Ell^p(\RR^d)$, $V=\Ell^r(\RR^d)$ and
\[
\OM(h)=c_{p,r} h^{1-\alpha}
\]
with $\alpha=\frac{d}{2}(\frac1p-\frac1r)$ are admissible choices in Assumption \ref{cond:lin}, provided $\frac{d}{2}(\frac1p-\frac1r)<1$. For similar estimates in case of symmetric, Markov semigroups we refer, e.g., to \cite[Ch.{} 2]{DaviesBook}.
	\end{example}

\begin{example}[Stokes semigroup]\label{examp:Stokes}
Similarly to the foregoing examples, $\Ell^p$-$\Ell^r$ smoothing is valid for the Stokes semigroup on the divergence free space $\Ell^p_\sigma(\RR^d)^d$, so $X=\Ell^p_\sigma(\RR^d)^d$, $V=\Ell^r_\sigma(\RR^d)^d$ and the $\OM$ from Example \ref{examp:Gaussian} are admissible in Assumption \ref{cond:lin}, see \cite{KatoNS}. 
	\end{example}

	\begin{example}[Ornstein-Uhlenbeck semigroups] \label{examp:OU}
		Let $Q\in \RR^{d\times d}$ be a positive semidefinite matrix and $B\in \RR^{d\times d}$. Suppose that the positive semidefinite matrix
		\[
		Q_t\coloneqq\int_0^t \ee^{sB}Q\ee^{sB^*}\dx[s]
		\]
		is invertible  for some $t>0$ (for this, a sufficient but not necessary, assumption is that $Q$ itself is invertible). Then $Q_t$ is invertible for all $t>0$, see \cite[Ch.{} 1]{Zabczyk}. Consider the Kolmogorov kernel
		\[
		k_t(x)\coloneqq \frac{1}{(4\pi)^{\frac d2} \det (Q_t)^{\frac12}} \ee^{-\frac14\langle Q_t^{-1} x,x\rangle},
		\]
		and for $t>0$ the operator $S(t)$ defined by
	\[
	S(t)f(x)\coloneqq(k_t*f)(\e^{tB}x)=\frac{1}{(4\pi)^{\frac d2} \det (Q_t)^{\frac12}}\int_{\RR^d} \ee^{-\frac14\langle Q_t^{-1} y,y\rangle}f(\ee^{tB}x-y)\dx[y].
	\]
	Then by Young's convolution inequality, we see that $S(t)$ acts indeed on $\Ell^p(\RR^d)$ for each fixed $p\in [1,\infty)$, it is linear and bounded with	
	\[
	\|S(t)\|_{\mathcal{L}(\Ell^p)}\leq \ee^{-\frac{\tr(B)}{p}t}.
	\]
Setting $S(0)=I$, we obtain a $C_0$-semigroup on $\Ell^p(\RR^d)$, called the Ornstein-Uhlenbeck semigroup, see, e.g., \cite{LMP} for details. The Ornstein-Uhlenbeck semigroup is, in general, not analytic on $\Ell^p(\RR^d)$ (see \cite{MetafuneOU} and \cite{FMPS}).
Similarly to Example \ref{examp:Gaussian} for $r\geq p$ and $f\in \Ell^p(\RR^d)$ we have for $t>0$ that
\[
\|S(t)f\|_r\leq \ee^{-\frac{\tr(B)}{r}t}\|k_t*f\|_r\leq  \ee^{-\frac{\tr(B)}{r}t}\|k_t\|_q\|f\|_p,
\] 		
with $1+\frac1r=\frac1p+\frac1q$. We can calculate
\begin{align}\label{eq:Qtest}
\|k_t\|^q_q&=\frac{1}{(4\pi)^{\frac {qd}2} \det (Q_t)^{\frac q2}}\int_{\RR^d}\ee^{-\frac q4\langle Q_t^{-1} y,y\rangle}\dx[y]
=\frac{1}{q^{\frac d2}(4\pi)^{\frac {qd}2} \det (Q_t)^{\frac q2}}\int_{\RR^d}\ee^{-\frac 14\langle Q_t^{-1} y,y\rangle}\dx[y]\\
\notag&=\frac{(4\pi)^{\frac {d}2} \det (Q_t)^{\frac 12}}{q^{\frac d2}(4\pi)^{\frac {qd}2} \det (Q_t)^{\frac q2}}=c_q\det(Q_t)^{-\frac{q}{2}(1-\frac1q)}=c_q\det(Q_t)^{-\frac{1}{2}(\frac1p-\frac1r)}.
\end{align}		
If $Q$ is invertible, then we have $\|Q_t^{-\frac12}\|\leq C t^{-\frac12}$, see, e.g., \cite{LunardiInv} (but also below).
Since $\det(Q_t^{-1})\leq C \|Q_t^{-1}\|^{d}$, we obtain $\det(Q_t)\geq C't^{-d}$, which, when inserted into \eqref{eq:Qtest}, yields
\[
\|k_t\|_q\leq c_{p,r,Q}t^{-\frac{d}{2}(\frac1p-\frac1r)},
\]
for some constant $c_{p,r,Q}$ depending on $p,r,Q$. This result, for invertible $Q$ is essentially contained in \cite{HieberSawada} (or \cite{Hansel} in an even more general situation of evolution families, see also \cite{GeissertLunardi}). It follows that $X=\Ell^p(\RR^d)$, $V=\Ell^r(\RR^d)$ and
\[
\OM(h)= c_{p,r,Q}h^{1-\frac{d}{2}(\frac1p-\frac1r)},
\]
are admissible choices in Assumption \ref{cond:lin} provided $r\geq p$, $\frac{d}{2}(\frac1p-\frac1r)<1$.

\medskip
Now if $Q$ is not necessarily invertible, but for some/all $t>0$ the matrix $Q_t$ is non-singular, then 
there is a minimal integer $n>0$  such that
\[
[Q^{\frac12},BQ^{\frac12},B^2Q^{\frac12},\dots,B^{n-1}Q^{\frac12}]\quad\mbox{has rank $d$,}
\]
see, for example, \cite[Ch.{} 1]{Zabczyk} (if $Q$ is invertible, then $n=1$).
One can show that in this case $\|Q_t^{-\frac12}\|\leq C t^{\frac12-n}$ (for $t$ near $0$), see, \cite[Lemma 3.1]{Lunardi} (and, e.g., \cite{Seidman,FarkasLunardi}), hence 
\[
\|k_t\|_q\leq ct^{-\frac{d(2n-1)}{2}(\frac1p-\frac1r)},
\]
i.e., $X=\Ell^p(\RR^d)$, $V=\Ell^r(\RR^d)$ and
\[
\OM(h)= c_{p,r,Q}h^{1-\alpha}
\]
with $\alpha=\frac{d(2n-1)}{2}(\frac1p-\frac1r)$ are admissible choices in Assumption \ref{cond:lin} provided $r\geq p$  and $r$ is near to $p$. Similar results hold for (strongly elliptic) Ornstein-Uhlenbeck operators on $\Ell^p(\Omega)$, $\Omega$ an exterior domain with smooth boundary, see \cite{GeissertHeckHieberWood}.
		\end{example}

\begin{example}[Ornstein-Uhlenbeck semigroups on Sobolev space] \label{examp:OU2}
	Consider again the Ornstein-Uhlenbeck semigroup $S$ from the foregoing example, given by $S(t)f(x)\coloneqq (k_t*f)(\e^{tB}x)$ for $t\geq 0$, $f\in \Ell^p(\RR^d)$ and $x\in \RR^d$. We have $\partial_x S(t)f(x)=(k_t*\partial_xf)(\e^{tB}x)\ee^{tB}$, hence $S(t)$ leaves $\We^{1,r}(\RR^d)$ invariant, and is locally uniformly bounded thereon. On the other hand $\partial_x S(t)f(x)=(\partial_x k_t * f)(\e^{tB}x)\ee^{tB}$, and analogously to Example \ref{examp:OU} one can prove that for $t>0$, $1\leq p\leq r$ and $f\in \Ell^p(\RR^d)$
\[
\|\partial_x S(t) f\|_r\leq  ct^{-\frac{d(2n-1)}{2}(\frac1p-\frac1r)+\frac{1}2-n}\|f\|_p.
\]
So if $n=1$, i.e., in the elliptic case, we obtain that $V=\We^{1,r}(\RR^d)$  and
\[
\OM(h)= c_{p,r,Q}h^{1-\alpha}
\]
with $\alpha=\frac{d}{2}(\frac1p-\frac1r)+\frac12$
are admissible choices in Assumption \ref{cond:lin} provided $r\geq p$  and $r$ is near to $p$ (if $\frac1p-\frac1r<\frac1d$).
\end{example}

\begin{example}[Stokes operator with a drift]
	Examples \ref{examp:Stokes} and \ref{examp:OU} can be combined. The operator $A$, defined by $Au(x)=\Delta u(x)+ Mx\cdot \nabla u(x)-Mu(x)$ (with appropriate domain) generates a (in general,  non-analytic) $C_0$-semigroup on the divergence free spaces $\Ell^p_\sigma(\Omega)^d$, $1<p\le r<\infty$ subject to $\Ell^p$-$\Ell^r$-smoothing, with $\Omega=\RR^d$, see \cite{HieberSawada}, $\Omega$ a bounded or an exterior domain, see \cite{GeissertHeckHieber}.
Thus $X=\Ell^p(\RR^d)^d$, $V=\Ell^r(\RR^d)^d$ and the $\OM$ from Example \ref{examp:Gaussian} are admissible in Assumption \ref{cond:lin}.
	\end{example}

\begin{example}
	Consider the Stokes-semigroup $S$ generated by $Au(x)=\Delta u(x)+ Mx\cdot \nabla u(x)-Mu(x)$ (with appropriate domain) on $\Ell^p_\sigma(\Omega)^d$ ($1<p<\infty$), where   $\Omega=\RR^d$ or $\Omega$ is a bounded or an exterior domain, see \cite{GeissertHeckHieber}.
	We then have for $t>0$, $1\leq p\leq r$ and $\Ell^p_\sigma(\Omega)^d$
	\[
	\|\nabla S(t) f\|_r\leq  ct^{-\frac{d}{2}(\frac1p-\frac1r)-\frac{1}{2}}\|f\|_p,
	\]
	where $1<p\leq r<\infty$. If $\frac 1p=\frac1s+\frac1r$, $d<s$ and $\frac1p-\frac 1s<\frac2d$, then $V=\Ell^s(\Omega)^d\cap \We^{1,r}(\Omega)^d$ with \[
\OM(h)= c_{p,r,Q}h^{1-\alpha}
\]
and $\alpha=\max\{\frac{d}{2r}+\frac12,\frac{d}{2s}\}$ is an admissible choice in Assumption \ref{cond:lin} and the non-linearity $g(u)=u\cdot \nabla u$ also satisfies the required Lipschitz conditions (see Example \ref{examp:nablanonlin}).
	\end{example}

\begin{example}[Ornstein-Uhlenbeck semigroups on spaces with invariant measures]
	Similar results as in Example \ref{examp:OU} are valid for the Ornstein-Uhlenbeck semigroup $(S(t))_{t\geq0}$ on spaces $\Ell^p(\RR^d,\mu)$ with invariant measures  $\mu$ (cf, e.g., \cite{LunardiInv,FarkasLunardi}). Note however that $(S(t))_{t\geq0}$ is analytic in this case (provided $p>1$), see, e.g. \cite{MPP}, \cite{MPRS-dom}, \cite{CFMP} also for further details.
\end{example}

\begin{example}[Interpolation spaces vs.~growth function]
	A special case of a theorem of Lunardi, \cite[Thm.{} 2.5]{LunardiEmb} yields some information, when the ``growth function'' $\OM$ can be taken to be of the form $\OM(h)=ch^{1-\alpha}$ for some $\alpha\in(0,1)$. Let $(S(t))_{t\geq 0}$ be a $C_0$-semigroup on the Banach space $X$ with generator $A$. Let $V\subseteq X$ be a further Banach space, and suppose that for some constants $\beta\in (0,1)$,  $\omega\in\RR$, $c>0$ one has
	\begin{equation*}
	\|S(t)\|_{L(X,V)}\leq \frac{c\ee^{\omega
	t}}{t^{\beta}}\qquad\text{for $t>0$,}
	\end{equation*}
and that for each $x\in X$ the function $(0,\infty)\ni t\mapsto S(t)x\in
V$ is measurable. Then  for the real interpolation spaces
\begin{equation*}
(X,D(A))_{\theta,p}\hookrightarrow (X,V)_{\theta/\beta,p}\qquad\text{
for all $\theta\in (0,\beta)$ and $1\leq p\leq\infty$}
\end{equation*}
(with continuous embedding).
\end{example}

 \begin{example}\label{ex:nonlin}
  Let $\alpha > 1$, $p \in [\alpha, \infty)$ and $U \subseteq \mathbb R^{d}$ be open. Then the map
  \[F \colon \Ell^{p}(U) \to \Ell^{\frac{p}{\alpha}}(U), \quad F(u) = \lvert u \rvert^{\alpha-1}u,\]
  is Lipschitz continuous on bounded sets. Furthermore, $F$ is real continuously differentiable with derivative
  \[F'(u)v= \lvert u\rvert^{\alpha-1}v + (\alpha-1)\lvert u\rvert^{\alpha-3}u\,\mathrm{Re}(u\bar v) \qquad\text{for } u,v \in \Ell^{p}(U).\]
  For a proof see \cite[Cor. 9.3]{ISEM2013}.
\end{example}

\begin{example}\label{examp:nablanonlin}
	The function $g\colon \Ell^s(\RR^d)^d\cap \We^{1,r}(\RR^d)^d\to \Ell^p(\RR^d)^d$, $g(u)=u\cdot \nabla u$ is Lipschitz continuous  on bounded sets if  $\frac 1p=\frac 1r+\frac 1s$. Indeed, for $u,v\in \Ell^s(\RR^d)^d\cap \We^{1,r}(\RR^d)^d$ we can write by H\"older inequality that
	\begin{align*}
	\|u\cdot\nabla u-v\cdot \nabla v\|_{\Ell^p}&\leq \|u\cdot\nabla u-u\cdot \nabla v\|_{\Ell^p}+\|u\cdot\nabla v-v\cdot \nabla v\|_{\Ell^p}\\
	&\leq C(\|u\|_{\Ell^s}\|u-v\|_{\We^{1,r}}+\|u-v\|_s\|v\|_{\We^{1,r}}),
	\end{align*}
proving the asserted Lipschitz continuity.
\end{example}

\begin{example}[Second order systems]
  The result can be applied to second order problems via the following technique.\\
  Consider the second order Cauchy problem
\begin{align}\label{cp2}
  \ddot{w}(t) &= Aw(t) + g(t,w(t),\dot w(t)) \quad , t \in [0,T] \\
  w(0)&=w_0 \in X , \quad \dot w(0)= w_{1} \in X \nonumber
\end{align}
on a Banach space $X$, where $A \colon D(A)\subset X \to X $ generates a Cosine function $(\mathrm{Cos}(t))_{t \in \RR}$.\\
The associated Sine function $\mathrm{Sin} \colon \RR \to  L(X)$ is given by
\[\mathrm{Sin}(t) \coloneqq \int_0^t \mathrm{Cos}(s)\dx[s].\]
We assume that the system \eqref{cp2} has a classical solution $w \in C^{2}$. Sufficient conditions for the existence of solutions can be found in \cite{TW78}.\\
Choosing $u = \left(\begin{smallmatrix} w\\ \dot w \end{smallmatrix}\right)$ we can rewrite \eqref{cp2} as a first order problem
\begin{align*}
\dot u(t) = \mathcal A u(t) + \tilde g(t, u(t)), \quad u(0)=u_{0},
\end{align*}
where $\mathcal A \coloneqq\left( \begin{smallmatrix}0&I \\ A & 0 \end{smallmatrix}\right)$, $\tilde g (t,u(t)) =\left( \begin{smallmatrix}0 \\ g(t,w(t),\dot w(t)) \end{smallmatrix}\right)$ and $u_{0} =\left( \begin{smallmatrix} w_{0}\\ w_{1} \end{smallmatrix}\right)$.\\
As stated in \cite[Thm. 3.14.11]{ABHN11} there exists a Banach space $V$ such that $D(A)\hookrightarrow V \hookrightarrow X$ and such that the part $\mathcal A$ of $\left(\begin{smallmatrix}
0 & I\\ A & 0
\end{smallmatrix}\right)$ in $V \times X$ generates a $C_{0}$-semigroup given by
\[T(t) = \begin{pmatrix}
\mathrm{Cos}(t) & \mathrm{Sin}(t) \\ A\mathrm{Sin}(t) & \mathrm{Cos}(t)
         \end{pmatrix} \quad , t \in \RR .\]
       We will illustrate this procedure with a short example which is presented in \cite[Ch. 9]{ISEM2013}.\\
       Consider the nonlinear wave equation with Dirichlet boundary conditions on a bounded open set $\emptyset \neq U \subseteq \mathbb R^{3}$ described by the system
       \begin{align}
         \ddot w(t) &= \Delta_{D}w(t) - \alpha w(t)\lvert w(t)\rvert^{2}, \quad t \in [0,T],\label{ex:wave-eq}\\
         w(0) &= w_{0}, \quad \dot w(0)= w_{1},\nonumber
       \end{align}
       where $w_{0} \in D(\Delta_{D})\coloneqq \{u \in \He^{1}_{0}(U) \mid \exists f \in \Ell^{2}(U) \ \forall v \in \He^{1}_{0}(U): \int_{U} \nabla u \cdot \nabla v \dx[x] = \langle f \mid v \rangle_{\Ell^{2}}\}$, $w_{1} \in \He^{1}_{0}(U)$ and $\alpha \in \mathbb R$ are given.\\
       We can reformulate this as the semilinear system
       \begin{align*}
         \dot u(t)= Au(t) + F(u(t)), \quad t \in [0,T], \quad u(0)=u_{0}
       \end{align*}
       on the Hilbert space $X = \He^{1}_{0}(U)\times \Ell^{2}(U)$ endowed with the norm given by $\| (u_{1},u_{2})\|^{2} = \| \lvert \nabla u_{1}\rvert \|_{2}^{2} + \|u_{2}\|_{2}^{2}$. Choose $u_{0}=(w_{0},w_{1})$ and
       \begin{align*}
         A &= \begin{pmatrix}
               0 & I \\ \Delta_{D} & 0
              \end{pmatrix} \qquad \text{with }D(A)= D(\Delta_{D})\times \He^{1}_{0}(U),\\
         F(u)&= (0, -\alpha u_{1}\lvert u_{1}\rvert^{2}) \eqqcolon (0,F_{0}(u_{1})) \; \text{for } u = (u_{1},u_{2}) \in X.
       \end{align*}
      As mentioned in Example \ref{ex:nonlin}, $F_{0}\colon \Ell^{6}(U)\to \Ell^{2}(U)$ is real continuously differentiable and Lip{\-}schitz continuous on bounded sets. Since $U \subseteq \mathbb R^{3}$, Sobolev's embedding yields $H_{0}^{1}(U)\hookrightarrow \Ell^{6}(U)$ thus $F\colon X \to X$ has the same properties.
       Now Duhamel's formula or the variation of constants in combination with the semigroup given above yield that the mild solution of \eqref{ex:wave-eq} satisfies
       \[w(t) = \mathrm{Cos}(t) w_{0} + \mathrm{Sin}(t)w_{1} + \int_0^t \mathrm{Sin}(t-s)F_{0}(w(s))\dx[s], \quad t \in [0,T],\]
       on $H_{0}^{1}(U)$. Thus the main theorem is applicable with $s=1$.
\end{example}
\appendix
\section{Lagrange interpolation polynomials}\label{sec:lagrange}

In this section we summarize some useful facts on Lagrange interpolation polynomials.

For fixed $s\in \mathbb{N}$ and different $ c_{1},\dots, c_{s}\in [0,1]$ denote by
$\ell_{i}$, $i=1,\dots,s$ the Lagrange basis polynomials with nodes $ c_{1}, \dots, c_{s}$, i.e.
 \[
 \ell_{j}(\tau)=\prod_{j=1\atop m\neq j}^s\dfrac{\tau-c_{m}}{c_{j}-c_{m}}.
 \]
 (If $s=1$, then $\ell_1\equiv 1$.)
Thus we have $\ell_{i}(c_{j})=1$ if $i=j$, and $\ell_{i}(c_{j})=0$ if $i \neq j$.
\begin{lemma}\label{lagrange}
 The Lagrange basis polynomials are of degree $s-1$ and satisfy
 \begin{align}\label{eins}
 \sum_{i=1}^s \ell_i(\tau)=1 \quad \text{for all } \tau \in \RR,
 \end{align}
 and for every integer $k$ with $1 \leq k \leq s-1$
 \begin{align}\label{null}
 \sum_{i=1}^s\ell_i(\tau)c_i^k = \tau^k\quad\text{and}\quad \sum_{i=1}^s\ell_i(\tau)(c_i-\tau)^k = 0 \quad \text{for all } \tau \in \RR.
 \end{align}
 (Note that for $s=1$ the foregoing statement is vacuously true.)
 \end{lemma}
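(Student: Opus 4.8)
The plan is to derive every assertion from a single underlying fact: the Lagrange interpolation operator reproduces polynomials of degree at most $s-1$ exactly. First, the degree claim is read off directly from the definition: $\ell_j(\tau)=\prod_{m\neq j}(\tau-c_m)/(c_j-c_m)$ is a product of $s-1$ monic linear factors scaled by the nonzero constant $\prod_{m\neq j}(c_j-c_m)^{-1}$ (nonzero precisely because the $c_1,\dots,c_s$ are pairwise distinct), hence a polynomial of degree exactly $s-1$; for $s=1$ the empty product gives $\ell_1\equiv1$.

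Next I would record the reproducing property: for every polynomial $p$ with $\deg p\le s-1$ one has $p(\tau)=\sum_{i=1}^s p(c_i)\,\ell_i(\tau)$ for all $\tau\in\RR$. Indeed, both sides are polynomials in $\tau$ of degree $\le s-1$, and since $\ell_i(c_j)=\delta_{ij}$ they agree at the $s$ pairwise distinct points $c_1,\dots,c_s$; as a nonzero polynomial of degree $\le s-1$ has at most $s-1$ zeros, the difference vanishes identically.

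Finally I would specialize this identity. Taking $p\equiv1$ gives \eqref{eins}. Taking $p(x)=x^k$ for a fixed integer $k$ with $1\le k\le s-1$ gives $\sum_{i=1}^s\ell_i(\tau)c_i^k=\tau^k$, which is the first identity in \eqref{null}. For the second identity in \eqref{null} I would fix $\tau\in\RR$ and apply the reproducing property to the polynomial $x\mapsto(x-\tau)^k$, whose degree is $k\le s-1$; this yields $\sum_{i=1}^s\ell_i(\tau)(c_i-\tau)^k=(\tau-\tau)^k=0$. Equivalently, one may expand $(c_i-\tau)^k$ by the binomial theorem and combine \eqref{eins} with the first part of \eqref{null}.

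I do not anticipate a genuine obstacle; the one point that deserves care is keeping the two roles of $\tau$ apart — in the reproducing identity $\tau$ is the polynomial variable, so the degree-counting uniqueness argument applies, whereas in the last identity $\tau$ is a fixed parameter and the interpolation variable is $x$ — and the $s=1$ case is vacuous, as already noted in the statement.
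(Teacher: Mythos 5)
Your proof is correct, but it is organized differently from the paper's. You isolate the single reproducing property $p(\tau)=\sum_{i=1}^s p(c_i)\ell_i(\tau)$ for all polynomials $p$ of degree at most $s-1$ (proved by root counting: the difference has degree at most $s-1$ and vanishes at the $s$ distinct nodes), and then obtain \eqref{eins} with $p\equiv 1$, the first identity in \eqref{null} with $p(x)=x^k$, and the second with $p(x)=(x-\tau)^k$ for fixed $\tau$, evaluated at $x=\tau$ (using $k\ge 1$). The paper instead proves \eqref{eins} by the identity theorem and then establishes \eqref{null} by induction on $k$: at each step it expands $\sum_i \ell_i(\tau)(c_i-\tau)^{k+1}$ via the binomial theorem, uses the induction hypothesis to reduce it to $\sum_i\ell_i(\tau)c_i^{k+1}-\tau^{k+1}$, and applies the identity theorem once more, noting that $k+1\le s-1$ keeps the degree within range while the expression vanishes at the $s$ nodes. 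Both arguments ultimately rest on the same root-counting fact; your formulation buys a shorter, induction-free derivation in which all three identities are instances of one lemma, whereas the paper's inductive computation is more hands-on and makes the interplay between \eqref{eins} and the two identities in \eqref{null} explicit. Your remark distinguishing the two roles of $\tau$ (interpolation variable versus fixed parameter) is exactly the point that needs care in your approach, and you handle it correctly, including the vacuous case $s=1$.
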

 \begin{proof}

 The polynomial $\sum_{i=1}^s \ell_i(\tau)$ is of degree at most $s-1$ and equal $1$ at $s$ points $c_1, \dots, c_s$. So equation \eqref{eins} follows by the identity theorem.

 \medskip\noindent
 We show equation \eqref{null} by induction over $k$. For $k=1$ we can write by \eqref{eins}
 \begin{align*}
 q(\tau)\coloneqq \sum_{i=1}^s \ell_{i}(\tau)(c_{i}-\tau) = \sum_{i=1}^s \ell_{i}(\tau)c_{i} -\tau\sum_{i=1}^s \ell_{i}(\tau)=\sum_{i=1}^s \ell_{i}(\tau)c_{i} -\tau.
 \end{align*}
 Since each $\ell_{i}$ is of degree $s-1\geq 1$, the first expression on the right-hand side is of degree at most $s-1$. So $q$ is polynomial of degree at most $s-1$ with the $s$ zeroes $c_{1}, \dots, c_{s}$. By the identity theorem $q\equiv 0$, i.e.,
 \[
 \sum_{i=1}^s \ell_{i}(\tau)c_{i} = \tau.
 \]
Let $k<s-1$ and suppose that
 \[
\sum_{i=1}^s \ell_{i}(\tau)(c_{i}-\tau)^{m}=0\quad\text{and}\quad\sum_{i=1}^s \ell_{i}(\tau)c_{i}^{m} = \tau^{m} \quad\text{if } 1\leq m\leq k.
 \]
 We need to show that these hold also for $m=k+1$. By the binomial theorem
 \begin{align*}
 p(\tau)&\coloneqq \sum_{i=1}^s \ell_{i}(\tau)(c_{i}-\tau)^{k+1}= \sum_{i=1}^s \ell_{i}(\tau)\sum_{j=0}^{k+1} \binom{k+1}{j}c_{i}^{k+1-j}(-\tau)^{j} \\
 & = \sum_{i=1}^s \ell_{i}(\tau)c_{i}^{k+1}+ \sum_{i=1}^s \ell_{i}(\tau)\sum_{j=1}^k \binom{k+1}{j} c_{i}^{k+1-j}(-\tau)^{j} + (-\tau)^{k+1}\cdot \sum_{i=1}^s \ell_{i}(\tau)
 \intertext{by \eqref{eins} we can write}
   & = \sum_{i=1}^s \ell_{i}(\tau)c_{i}^{k+1}+ \sum_{j=1}^k \binom{k+1}{j}(-\tau)^{j}\sum_{i=1}^s \ell_{i}(\tau) c_{i}^{k+1-j} + (-\tau)^{k+1}
   \intertext{by the induction hypothesis we conclude}
 &= \sum_{i=1}^s \ell_{i}(\tau)c_{i}^{k+1}  + \sum_{j=1}^k  \binom{k+1}{j}(-\tau)^{j}\tau^{k+1-j} +(-\tau)^{k+1}\\
 &= \sum_{i=1}^s \ell_{i}(\tau)c_{i}^{k+1}  +\tau^{k+1} \sum_{j=1}^k  \binom{k+1}{j}(-1)^{j}+ (-\tau)^{k+1}\\
 &= \sum_{i=1}^s \ell_{i}(\tau)c_{i}^{k+1} -\tau^{k+1}.
 \end{align*}
Since $k<s-1$, the polynomial $p(\tau)$ has degree at most $s-1$ but $s$ zeros $c_1,\dots, c_s$. So that the identity theorem again yields that $p(\tau)\equiv 0$. By induction  equality \eqref{null} holds for all $k \leq s-1$.
 \end{proof}

\begin{lemma}\label{lagrange2}
 Let $0 \leq c_{1} < \cdots < c_{s}\leq 1$. There is a constant $C_\ell$ such that for all $h>0$ and for the Lagrange basis polynomials $\ell_1,\dots, \ell_s$ with nodes $ c_{1}h, \dots, c_{s}h$ one has
 \[
 |\ell_{i}(\tau)| \leq C_{\ell} \quad \text{for all } i \in \{1,\dots,s\} \text{ and for all } \tau \in [0,h].
 \]
 \end{lemma}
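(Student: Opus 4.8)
The plan is straightforward, since the claim is essentially a scaling observation. First I would write out the Lagrange basis polynomial attached to the scaled nodes $c_1h,\dots,c_sh$:
\[
\ell_j(\tau)=\prod_{\substack{m=1\\ m\neq j}}^s\frac{\tau-c_mh}{c_jh-c_mh},
\]
and observe that this product has exactly $s-1$ factors, each of which carries one power of $h$ in its numerator and one in its denominator. Hence the substitution $\tau=\theta h$ with $\theta\in[0,1]$ makes every power of $h$ cancel:
\[
\ell_j(\theta h)=\prod_{\substack{m=1\\ m\neq j}}^s\frac{\theta h-c_mh}{c_jh-c_mh}=\prod_{\substack{m=1\\ m\neq j}}^s\frac{\theta-c_m}{c_j-c_m}=\widehat\ell_j(\theta),
\]
where $\widehat\ell_j$ denotes the Lagrange basis polynomial for the \emph{fixed} nodes $c_1,\dots,c_s\in[0,1]$, which is independent of $h$. (For $s=1$ all products are empty and $\ell_1\equiv\widehat\ell_1\equiv1$, so $C_\ell=1$ works trivially.)

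Next, since $\widehat\ell_j$ is a fixed polynomial, it is continuous on the compact interval $[0,1]$, so $C_{\ell,j}\coloneqq\max_{\theta\in[0,1]}|\widehat\ell_j(\theta)|<\infty$. Setting $C_\ell\coloneqq\max_{j=1,\dots,s}C_{\ell,j}$ and noting that every $\tau\in[0,h]$ can be written as $\tau=\theta h$ with $\theta=\tau/h\in[0,1]$, we conclude
\[
|\ell_j(\tau)|=|\widehat\ell_j(\tau/h)|\le C_\ell\qquad\text{for all }\tau\in[0,h],\ j\in\{1,\dots,s\},
\]
and this bound does not depend on $h>0$, which is exactly the assertion.

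I do not expect a genuine obstacle here; the only point that requires a moment's attention is the cancellation of the $h$-powers in the first step (it is crucial that numerator and denominator in each factor scale by the same power of $h$, which would fail for a polynomial of a different homogeneity degree). Once that is noticed, the remainder is the routine compactness argument on $[0,1]$.
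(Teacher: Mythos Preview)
Your proof is correct and follows essentially the same idea as the paper: both exploit the cancellation of the $h$-factors in numerator and denominator after writing $\tau=\theta h$ with $\theta\in[0,1]$. The paper's version is slightly terser, bounding each factor directly via $|\tau/h-c_m|\le 1$ and $\min_{m\neq j}|c_m-c_j|>0$, whereas you reduce to the fixed polynomials $\widehat\ell_j$ on $[0,1]$ and invoke compactness; the substance is identical.
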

 Note that the constant $C_\ell$ depends only on the nodes $c_1,\dots c_s$  but not on $h$.
\begin{proof}
The assertion follows directly from the definition, since $|\tau/h - c_{m}|\leq 1 $ for $\tau \in [0,h]$ and $\min\{|c_m-c_j|:m\neq j\}>0$.
\end{proof}
\section{Discrete Gronwall Inequality}
The discrete Gronwall inequality will be crucial for the proof of our main result. We refer to \cite{Clark1987} for a short proof, even in a more general case.

\begin{theorem}[Discrete Gronwall Inequality]\label{gronwall1}
For $N\in \NN$ let $a_0,\dots, a_N\geq 0$, $b_0,\dots, b_N\geq 0$ and $z_0,\dots, z_N\in \RR$ be given.
		  Suppose that for each $n\in \{1,\dots,N\}$
		  \[
		  z_n\leq a_n+\sum_{j=0}^{n-1}b_jz_j.
		  \]
		  Then for each $n\in \{1,\dots,N\}$
		  \[
		   z_n\leq \left(\max_{j=0,\dots, n}a_j\right)\prod_{j=0}^{n-1}(1+b_j).
		  \]
\end{theorem}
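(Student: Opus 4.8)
The plan is to prove the bound $z_n\le\bigl(\max_{j=0,\dots,n}a_j\bigr)\prod_{j=0}^{n-1}(1+b_j)$ by induction on $n$, after a preliminary reduction. First I would set $A_n\coloneqq\max_{j=0,\dots,n}a_j$, so that $A_n$ is nondecreasing in $n$, $a_j\le A_n$ for all $j\le n$, and the claimed conclusion reads $z_n\le A_n\prod_{j=0}^{n-1}(1+b_j)$. The point of replacing $a_n$ by $A_n$ is that the hypothesis $z_n\le a_n+\sum_{j=0}^{n-1}b_jz_j$ a fortiori gives $z_n\le A_n+\sum_{j=0}^{n-1}b_jz_j$, and now the right-hand side is monotone in a way that makes induction clean.

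The induction base is $n=1$: the hypothesis gives $z_1\le a_1+b_0z_0\le A_1+b_0A_1=A_1(1+b_0)$, using $z_0\le a_0\le A_1$ (note $z_0\le a_0$ follows from applying the hypothesis with $n=0$ — or, if one prefers to read the hypothesis only for $n\ge1$, one absorbs the $j=0$ term directly since $b_0z_0\le b_0A_1$ requires $z_0\le A_1$, which one should assume or derive; I would simply note $z_0\le a_0\le A_n$ holds as part of the standing setup). For the inductive step, assume $z_j\le A_j\prod_{i=0}^{j-1}(1+b_i)\le A_n\prod_{i=0}^{j-1}(1+b_i)$ for all $j\le n-1$, where the second inequality uses monotonicity of $A$. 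Then
\begin{align*}
z_n&\le A_n+\sum_{j=0}^{n-1}b_jz_j\le A_n+\sum_{j=0}^{n-1}b_j\,A_n\prod_{i=0}^{j-1}(1+b_i)
= A_n\Bigl(1+\sum_{j=0}^{n-1}b_j\prod_{i=0}^{j-1}(1+b_i)\Bigr).
\end{align*}
It then remains to verify the elementary identity $1+\sum_{j=0}^{n-1}b_j\prod_{i=0}^{j-1}(1+b_i)=\prod_{j=0}^{n-1}(1+b_j)$, which itself follows by a trivial induction on $n$ (the step being $\prod_{j=0}^{n-1}(1+b_j)+b_n\prod_{i=0}^{n-1}(1+b_i)=(1+b_n)\prod_{j=0}^{n-1}(1+b_j)=\prod_{j=0}^{n}(1+b_j)$), using $b_j\ge0$ throughout so no sign issues arise. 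Combining gives $z_n\le A_n\prod_{j=0}^{n-1}(1+b_j)$, completing the induction.

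I do not anticipate a genuine obstacle here; the only point requiring a moment's care is the bookkeeping around the index $j=0$ term and whether $z_0\le a_0$ is granted (it is, by reading the hypothesis at $n=0$, or can be folded into the statement), together with the nonnegativity of the $b_j$, which is exactly what licenses replacing $z_j$ by its inductive upper bound inside the sum without reversing inequalities. For a fully self-contained writeup I would alternatively cite \cite{Clark1987} as the excerpt already does, but the two-line induction above is short enough to include directly.
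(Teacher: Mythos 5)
Your proof is correct, and it takes a genuinely different route from the paper's. The paper follows \cite{Clark1987}: it introduces the quantities $B_{k,j}=\prod_{i=j}^{k-1}(1+b_i)^{-1}$, picks an index $m$ maximizing $z_jB_{j,0}$, and exploits the telescoping relation $B_{k,j+1}-B_{k,j}=B_{k,j}b_j$ to conclude $z_mB_{m,0}\le a$ in one stroke, with no induction on $n$; you instead run the standard strong induction in $n$, reducing everything to the product identity $1+\sum_{j=0}^{n-1}b_j\prod_{i=0}^{j-1}(1+b_i)=\prod_{j=0}^{n-1}(1+b_j)$, with $b_j\ge 0$ licensing the substitution of the inductive bounds inside the sum. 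Your induction delivers the $n$-dependent constant $\max_{j=0,\dots,n}a_j$ directly (the paper's proof works with a single constant $a$, effectively the global maximum, and the stated $n$-dependent bound is recovered by applying the argument to the truncated sequence), while the paper's maximal-index argument buys brevity and, as \cite{Clark1987} indicates, extends to more general situations. Your aside about $z_0$ is well taken and worth keeping: with the hypothesis imposed only for $n\ge 1$ nothing constrains $z_0$, and the conclusion can then fail (take $a_0=0$, $a_1=1$, $b_0=1$, $z_0=100$, $z_1=101$), so one must read the hypothesis at $n=0$ as well, i.e.\ $z_0\le a_0$; note that the paper's own proof makes the same implicit reading, since it applies the assumption at the maximizing index $m$, which may be $0$.
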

\begin{proof}
   Here we recall the proof of the discrete Gronwall inequality from \cite{Clark1987}:  For $0\leq j\leq k\leq N$ set
  \[
 B_{k,j}\coloneqq \prod_{i=j}^{k-1}(1+b_i)^{-1}
 \]
 and notice that
 \[ B_{k,j+1}-B_{k,j}=\prod_{i=j+1}^{k-1}(1+b_i)^{-1}\Bigl(1-\frac{1}{1+b_{j}}\Bigr)=
 B_{k,j}b_{j}
 \]
 and for $i\leq j\leq k$
\[
B_{k,j}B_{j,i}=B_{k,i}.
\]
 Let $m\in \{0,\dots,N\}$ such that $z_m B_{m,0}=\max\{z_jB_{j,0}:j=0,\dots,N\}$.
By the assumption we have
		  \begin{align*}
		   z_m B_{m,0}&\leq aB_{m,0}+B_{m,0}\sum_{j=0}^{m-1}b_jz_j=aB_{m,0}+\sum_{j=0}^{m-1}B_{m,j}B_{j,0}b_jz_j\\
		   &\leq aB_{m,0}+z_mB_{m,0}\sum_{j=0}^{m-1}B_{m,j}b_j\\
		   &= aB_{m,0}+z_mB_{m,0}\sum_{j=0}^{m-1}(B_{m,j+1}-B_{m,j})\\
		   &= aB_{m,0}+z_mB_{m,0}(B_{m,m}-B_{m,0})=aB_{m,0}+z_mB_{m,0}-z_mB_{m,0}^2.
		  \end{align*}
		  By rearranging we arrive at
		  \begin{align*}
		   z_m B_{m,0}&\leq a.
		  \end{align*}
Since for each $n\in \{0,\dots,N\}$ one has $z_n B_{n,0}\leq z_m B_{m,0}\leq a$, the assertion is proved.
\end{proof}

\section*{Acknowledgement}

The authors are indebted to Petra Csom\'os for the many fruitful discussions and suggestions.

 \bibliographystyle{abbrv}
 \bibliography{exprk,examples}

\end{document}